\documentclass[a4paper,12pt, reqno]{amsart}
\usepackage{amsmath, amssymb, amsfonts, multirow, multicol, enumitem, mathtools, algpseudocode, mathrsfs,  comment, xspace, diagbox, setspace, natbib}
\usepackage[foot]{amsaddr}

\usepackage[colorlinks]{hyperref}
\setcitestyle{authoryear, round}
\hypersetup{linkcolor=black, urlcolor=black, citecolor=black}
\usepackage[margin=3cm]{geometry}

\usepackage{tikz}
\usetikzlibrary{matrix, chains, arrows, knots}

\newcommand{\uV}{\reflectbox{\rotatebox[origin=c]{180}{$V$}}}

\newcommand{\N}{\mathbb{N}}

\newcommand{\n}{\mathbf n}

\newcommand{\m}{\mathbf{m}}

\newtheorem{thm}{Theorem}[section]
\newtheorem{lem}[thm]{Lemma}

\theoremstyle{definition}
\newtheorem{definition}[thm]{Definition}

\newtheorem{problem}[thm]{Problem}

\DeclareMathOperator{\im}{im}
\DeclareMathOperator{\dom}{dom}

%%cases with individual tags
\newcommand{\caseseven}{\smash{\left\{\vphantom{\begin{aligned} \\ \\ \\ \\ \\ \\ \\[1ex]\end{aligned}}\right.}\!}
\newcommand{\phantomseven}{\mathrel{\phantom{=}}\hphantom{\caseseven}} 
\newcommand{\casetwo}{\smash{\left\{\vphantom{\begin{aligned} \\ \\[-0.75ex] \end{aligned}}\right.}\!}
\newcommand{\phantomtwo}{\mathrel{\phantom{=}}\hphantom{\casetwo}}
\newcommand{\lefttermtwo}[1]{\smash{\raisebox{-0.8em}{$#1$}} &\smash{\raisebox{-0.8em}{ $= \casetwo$}}}

\newcommand{\phantomtwoC}{\mathrel{\phantom{=}}\hphantom{\casetwo}}
\newcommand{\lefttermtwoC}[1]{\smash{\raisebox{-0.7em}{$#1$}} &\smash{\raisebox{-0.7em}{ $= \casetwo$}}}

\newcommand{\ltag}[1]{\tag*{#1}\label{#1}} %label and tag
 %spacing

\title[An algebraic inversion and deletion model]{An algebraic model for inversion and deletion in bacterial genome rearrangement}
\author{Chad Clark\textsuperscript{\,\lowercase{a$\ast$}}}
\address[a]{Centre for Research in Mathematics and Data Science, Western Sydney University, Penrith, NSW, Australia}
\email{\textsuperscript{*}Corresponding author: chad.clark@westernsydney.edu.au}
\author{Julius Jonu\v sas\textsuperscript{\,\lowercase{b}}}
\address[b]{Mathematical Institute, School of Mathematics and Statistics, University of St Andrews, St Andrews, UK}
\author{James D. Mitchell\textsuperscript{\,\lowercase{b}}} 
\author{Andrew Francis\textsuperscript{\,\lowercase{a}}}
% \onehalfspacing

\begin{document}

\begin{abstract}
Inversions, also sometimes called reversals, are a major contributor to variation among bacterial genomes, with studies suggesting that those involving small numbers of regions are more likely than larger inversions. Deletions may arise in bacterial genomes through the same biological mechanism as inversions, and hence a model that incorporates both is desirable.  However, while inversion distances between genomes have been well studied, there has yet to be a model which accounts for the combination of both deletions and inversions. 

To account for both of these operations, we introduce an algebraic model that utilises partial permutations.  This leads to an algorithm for calculating the minimum distance to the most recent common ancestor of two bacterial genomes evolving by inversions (of adjacent regions) and deletions. 
The algebraic model makes the existing short inversion models more complete and realistic by including deletions, and also introduces new algebraic tools into evolutionary distance problems.

\smallskip
\noindent \textbf{Keywords:} bacterial genomics, distance, phylogenetics, inversion, deletion, partial permutation 

\smallskip
\noindent \textbf{MSC(2020)}: 20M20, 92D15, 20M05
\end{abstract}
\maketitle

\section{Introduction} \label{sec:intro}
Methods for computing the evolutionary distance between bacterial genomes are important for phylogenetic reconstruction, especially by way of contrast with organisms that have morphological characteristics and better defined species boundaries.  Approaches to distances based on large-scale rearrangements have been widely studied in bacteria because they are often relatively quick to compute and can be used to complement, or even improve, trees based on other methods such as sequence comparisons \citep{bochkareva2018genome}.

The bacterial genomes that we will consider have a single circular chromosome.  During the evolution of bacterial genomes a frequent rearrangement event is the inversion, where the clockwise order of a contiguous block of conserved regions is reversed \citep{eisen2000evidence}. If the orientation of regions is taken into account, these events also reverse the orientations of regions in this block. While most early mathematical models assumed the probability of all inversions to be equal, evidence to the contrary has emerged which suggests shorter inversions are more likely~\citep{seoighe2000prevalence, Dalevi2002measuring, Lefebvre2003detection, darling2008}. With this in mind, throughout this paper we will be concerned with inversions of length two. 

Many other large scale changes to bacterial DNA have been observed and investigated, notably insertion of novel DNA (horizontal gene transfer), deletion of segments, translocation of segments to different locations on the genome, and duplication of segments \citep{saier2008bacterial}. Deletions are special in the context of inversions however, because they can occur by the same mechanism, namely site-specific recombination~\citep{plasterk}. This means that inversions and deletion are related biologically in a way that other combinations of rearrangement operations are not.

Site-specific recombination acts on the circular genome by forming a synaptic complex around two copies of a specific sequence on the genome, that might be far apart on the sequence but close together in a three-dimensional sense in the cell.  The recombinase then cuts the DNA at both sites and rejoins across the two, in effect locally replacing a trivial 2-braid with a braid generator (as an algebraic topologist might describe it).  This event can result in the inversion of a segment of the genome relative to the rest of the genome, but can also result in the deletion of a segment, as shown in Figure~\ref{f:ssr}.  

\begin{figure}[ht]
\centering
\begin{tikzpicture}
\begin{knot}[
% draft mode = crossings, 
clip width = 5, 
consider self intersections ,
% ignore endpoint intersections=false ,
flip crossing = 2
]
\shade[left color= gray,right color= gray] (.8,-2) rectangle (2.2,-3 ); %left color=blue,right color=red
\strand [ultra thick] (0,-2) 
	to [out=up,in=up] (1,-2)
	to (1,-3)
	to [out=down,in=up] (0,-4.4)
	to [out=down, in=left] (1,-5)
	to [out=right,in=south west] (2,-4.4)
	to [out=north east,in=down] (3,-3)
	to (3,-2)
	to [out=up,in=up] (2,-2)
	to (2,-3)
	to [out=down,in=up] (3,-4.4)
	to [out=down,in=right] (2,-5)
	to [out=left,in=south east] (1,-4.4)
	to [out=north west,in=down] (0,-3)
	to (0,-2);
\end{knot}
\draw[->,>=latex,thick](3.5,-3.5)--(4.5,-3.5);
\end{tikzpicture}
\hspace{2mm}
\begin{tikzpicture}
\begin{knot}[
% draft mode = crossings, 
clip width = 5,
consider self intersections ,
% ignore endpoint intersections=false ,
]
\strand [ultra thick] (0,-2) 
	to [out=up,in=up] (1,-2)
	to [out=down,in=up] (2,-3)
	to [out=down,in=up] (3,-4.4)
	to [out=down,in=right] (2,-5)
	to [out=left,in=south east] (1,-4.4)
	to [out=north west,in=down] (0,-3)
	to (0,-2);

\strand [ultra thick] (1,-3)
	to [out=down,in=up] (0,-4.4)
	to [out=down, in=left] (1,-5)
	to [out=right,in=south west] (2,-4.4)
	to [out=north east,in=down] (3,-3)
	to (3,-2)
	to [out=up,in=up] (2,-2)
	to [out=down,in=up] (1,-3);
\flipcrossings{1,2,4}
\end{knot}
\end{tikzpicture}
\caption{Site-specific recombination giving rise to deletion, with the area of recombinase action shown shaded on the left.  The result is in fact a pair of linked components (topologically, a ``Hopf link''), but over time any component without the essential genes from the original genome (such as origin and terminus of replication) would degrade and the result would be a genome without the genetic material from that component (that is, a deletion).  If the figure on the left had an even number of twists the result would be an inversion (see for example~\citet{francis2014algebraic}).}
\label{f:ssr}
\end{figure}
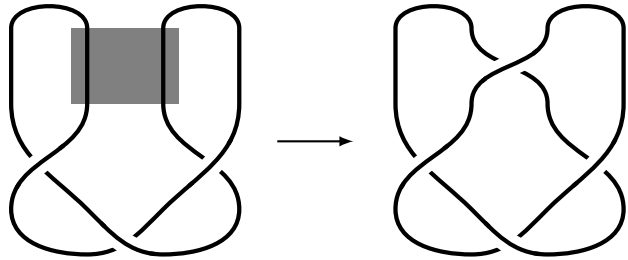

Most rearrangement models, with few exceptions (see \citet{alexandrino2021} for instance), assume that the genomes in question have the same sets of regions. While inversion models need both genomes to have the same  gene content (or ignore gene content that is not shared), a model incorporating both inversions and deletions can model an evolutionary history of two genomes with differing gene content under the assumption that they both evolved from a common ancestor with the union of their sets of genes.  Incorporating deletions thus enables a wider class of genomes to be compared more completely, especially since in some instances (see \citet{raeside2014large}) deletions are the most frequently observed recombination event. 

By thinking of bacterial genomes as sequences of region labels or integers (see \citet{bhatia2018position} for a review of these conventions), a pair of genomes $\sigma_1$ and $\sigma_2$ can be represented by signed or unsigned permutations, assuming all regions are distinct.  The minimum length sequence of operations $t_1,\cdots , t_k$ such that $\sigma_1t_1\cdots t_k = \sigma_2$ consequently provides an estimate of the evolutionary distance between these genomes.  These distances may then be used to reconstruct phylogenetic trees using methods such as neighbor-joining \citep{Saitou1987neighbour}. 

Although finding the unsigned inversion distance between two genomes is \textbf{NP}-hard \citep{caprara1997sorting}, the signed inversion distance can be found in polynomial time when all inversions (of any length) are assumed to be equally likely \citep{hannenhalli1999transforming}. For unsigned inversions, an upper bound on the inversion distance between genomes was first provided in \citep{Watterson-chrom-reversal-1982}, with polynomial time algorithms later established from a combinatorial perspective by \citet{Jerrum1985complexity} and an algebraic perspective by \citep{egrinagy2014group}. Polynomial time algorithms also exist for signed inversion distances \citep{galvao2017sorting, oliveira2018sorting} (using terms such as ``super short reversal''). 

When a polynomial time algorithm for a rearrangement distance exists, it is often possible to incorporate both deletions and \emph{insertions} into the model.  Polynomial time algorithms exist for calculating the minimal genomic distance under exclusively insertions and deletions \citep{marron2004genomic}, with insertions, deletions and signed inversions \citep{el2000genome}, and with inversions, transpositions, insertions and deletions \citep{alexandrino2021genome}. Insertions and deletions have also been incorporated into other models such as double cut and join \citep{braga2010genomic, shao2012approximating}. 

When insertions and deletions are both allowed, the minimum distance between any pair of genomes $G_1$ and $G_2$ with region labels $R_1$ and $R_2$ respectively always exists. Furthermore, this distance is symmetric in the sense that the distance from $G_1$ to $G_2$ is the same as the distance from $G_2$ to $G_1$, because the deletion of a region can be ``undone'' by inserting the deleted region back into the genome and vice versa. There is, however, little work that considers the addition of deletions without also considering insertions. When considering deletions without insertions, unless we make the assumption that $R_1 \subseteq R_2$ or $R_2 \subseteq R_1$ or both (as in~\citet{el2000genome}), there will not necessarily be an inversion/deletion sequence that transforms one genome into the other. To deal with this, we will provide a model for directly reconstructing the most recent common ancestor of $G_1$ and $G_2$. 

This model will make use of partial analogues of the symmetric group, namely the symmetric inverse monoid and the symmetric inverse category, which will be discussed in Section \ref{s:alg.prelim}. To the best of the authors' knowledge they have not yet been explicitly used in any distance-based methods.  
When working with these structures, it is advantageous to adopt the convention of writing maps on the right and composing from left to right. That is, we write $(x)f$ instead of $f(x)$, and $fg$ is written instead of $g \circ f$.  

Hereafter we will use the term ``inversion'' to mean an inversion of precisely two adjacent regions. The paper proceeds as follows. In Section \ref{sec:algmodel} we provide an algebraic framework for describing bacterial genomes. After introducing a number of key algebraic structures in Section \ref{s:alg.prelim}, these structures are then used to establish an algebraic model of the inversion/deletion process in Section \ref{sec:invdelmodel}. This allows us to define a problem called the region alignment problem, where it will be shown that solving this problem over all pairs of orientations of $G_1$ and $G_2$ allows for the reconstruction of a parsimonious most recent common ancestor with respect to inversion and deletions. An exact algorithm for calculating this distance is provided in Section \ref{sec:exact}.  The paper ends with a Discussion in Section~\ref{s:discussion} that describes some of the important limitations of the models here, and also some of the opportunities for further development.  In particular, it is to be hoped that the introduction of the semigroup models here will lead to further work by algebraists to improve applicability and utility of genome rearrangement models.

\section{An Algebraic Model of Bacterial Genomes}\label{sec:algmodel}

For a circular genome $G$ with a set $R$ of $n$ distinct regions, different rotations and reflections of $G$ represent different ways of viewing the genome in three dimensional space. These symmetries are accounted for by an action of the dihedral group $D_{n}$, which consists of permutations in the symmetric group $S_n$ (the group of permutations of $\mathbf{n} = \{1, \dots, n\}$) representing the rotations and reflections of an $n$-gon. Beginning with a set $X_R$ containing the $n!$ words of length $n$ whose distinct letters are from $R$, consider the action $\cdot$ of $D_n$ on $X_R$ where for $\sigma \in D_n$ we have
\[
\sigma \mathrel{\cdot} x_1\cdots x_n = x_{(1)\sigma}\cdots x_{(n)\sigma}. 
\]
The equivalence relation $\sim$ on $X_R$ induced by this action (where words $u, v \in X_R$ are related if and only if there exists $\sigma \in D_n$ such that $u = \sigma \mathrel{\cdot} v$) allows for the following algebraic definition of a circular genome.
\begin{definition}
A \emph{genome} $G$ with region set $R$ is an equivalence class in the quotient set $X_R/\sim$. 
\end{definition}
For $u \in X_R$ the equivalence class of $u$ is denoted by $[u]$, elements of each equivalence class (words in $X_R$) are called the \emph{reference frames} of $G$, and for two genomes $G_1$ and $G_2$ a \emph{reference pair} is an element of the Cartesian product $G_1 \times G_2$.

To visualise the reference frames of a genome, begin with the unit circle centered at $(0,0)$ in $\mathbb{R}^2$ and specify a distinguished point at $(0,1)$. Subdivide the circle into $n$ arcs of equivalent length proceeding clockwise from $(0,1)$ where the arc immediately clockwise from $(0,1)$ is considered to be position 1, the next arc clockwise is considered to be position $2$ and so on until we reach position $n$ (which will be the arc directly anti-clockwise from $(0,1)$). If $x_1\cdots x_n$ is a reference frame of $G$ then its diagram is obtained by labelling position $i$ by $x_i \in R$ via bijection $\lambda : R \to \n$ from regions to positions (see Figure \ref{fig:orientation}). With this is mind, these bijections may also be used to represent reference frames rather than elements of $X_R$. 

\begin{figure}[ht]
\begin{center}
\begin{tikzpicture}[scale = 0.5]

      \draw (0,0) circle (2cm);
      \draw (0,0) node {$g_1$};
      \foreach \x in {0,45,90,...,315} \draw (\x:1.9cm) -- (\x:2.1cm);
      \def\rad{2.35cm}
      \draw (67.5:\rad) node[] {\footnotesize $a$}; %subtract 45 degrees going clockwise around.
      \draw (22.5:\rad) node[] {\footnotesize $b$};
      \draw (-22.5:\rad) node[] {\footnotesize $c$};  
      \draw (-67.5:\rad) node[] {\footnotesize $d$};
      \draw (-112.5:\rad) node[] {\footnotesize $e$};  
      \draw (-157.5:\rad) node[] {\footnotesize $f$};
      \draw (-202.5:\rad) node[] {\footnotesize $g$};  
      \draw (-247.5:\rad) node[] {\footnotesize $h$};  
      
\begin{scope}[xshift=8cm]
      \draw (0,0) circle (2cm);
      \draw (0,0) node {$g_2$};
      \foreach \x in {0,45,90,...,315} \draw (\x:1.9cm) -- (\x:2.1cm);
      \def\rad{2.35cm}
      \draw (67.5:\rad) node[] {\footnotesize $c$}; %subtract 45 degrees going clockwise around.
      \draw (22.5:\rad) node[] {\footnotesize $d$};
      \draw (-22.5:\rad) node[] {\footnotesize $e$};  
      \draw (-67.5:\rad) node[] {\footnotesize $f$};
      \draw (-112.5:\rad) node[] {\footnotesize $g$};  
      \draw (-157.5:\rad) node[] {\footnotesize $h$};
      \draw (-202.5:\rad) node[] {\footnotesize $a$};
      \draw (-247.5:\rad) node[] {\footnotesize $b$};  
      
\end{scope}

\begin{scope}[xshift=16cm]
      \draw (0,0) circle (2cm);
      \draw (0,0) node {$g_3$};
      \foreach \x in {0,45,90,...,315} \draw (\x:1.9cm) -- (\x:2.1cm);
      \def\rad{2.35cm}
      \draw (67.5:\rad) node[] {\footnotesize $h$}; %subtract 45 degrees going clockwise around.
      \draw (22.5:\rad) node[] {\footnotesize $g$};
      \draw (-22.5:\rad) node[] {\footnotesize $f$};  
      \draw (-67.5:\rad) node[] {\footnotesize $e$};
      \draw (-112.5:\rad) node[] {\footnotesize $d$};  
      \draw (-157.5:\rad) node[] {\footnotesize $c$};
      \draw (-202.5:\rad) node[] {\footnotesize $b$};
      \draw (-247.5:\rad) node[] {\footnotesize $a$};  
      
\end{scope}
\end{tikzpicture}
\end{center}
\caption{Given a set of regions $R = \{a,b,c,d,e,f,g,h\}$, the reference frames $g_1 = abcdefgh, g_2 = cdefghab$ and $g_3 = hgfedcba$ of the genome $[abcdefgh]$ represent different ways of viewing the same circular genome in three dimensional space. The reference frame $g_2$ is obtained by rotating $g_1$ two positions anticlockwise and $g_3$ is obtained by reflecting $g_1$ in the vertical axis.}
\label{fig:orientation}
\end{figure}
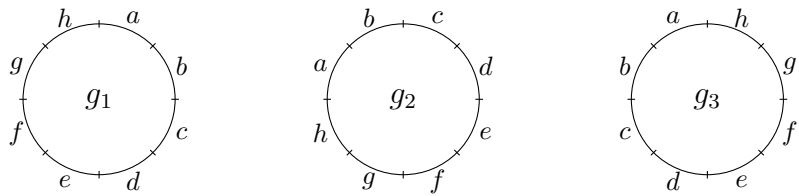 

We will proceed under the assumption that each genome has arisen via the minimum possible number of inversions and deletions, which is commonly known as the \emph{parsimony criterion}. This approach allows genome rearrangement problems to be viewed as combinatorial optimisation problems whose minimised solutions represent evolutionary distances in accordance with this criterion \citep{fertin2009combinatorics}. With this assumption in mind the most recent common ancestor of genomes $G_1$ and $G_2$ with region sets $R_1$ and $R_2$ respectively will have region set $R_1\cup R_2$, noting that it must certainly contain the union of the two sets of regions, but could possibly contain more (in which case a greater number of deletions would be required to yield $G_1$ and $G_2$, contradicting the parsimony criterion). 

Figure \ref{fig:example} illustrates an example of how reference frames $g_1$ and $g_2$ of genomes $G_1$ and $G_2$ respectively may arise via inversions and deletions from a (not necessarily most recent) common ancestor $A$. 

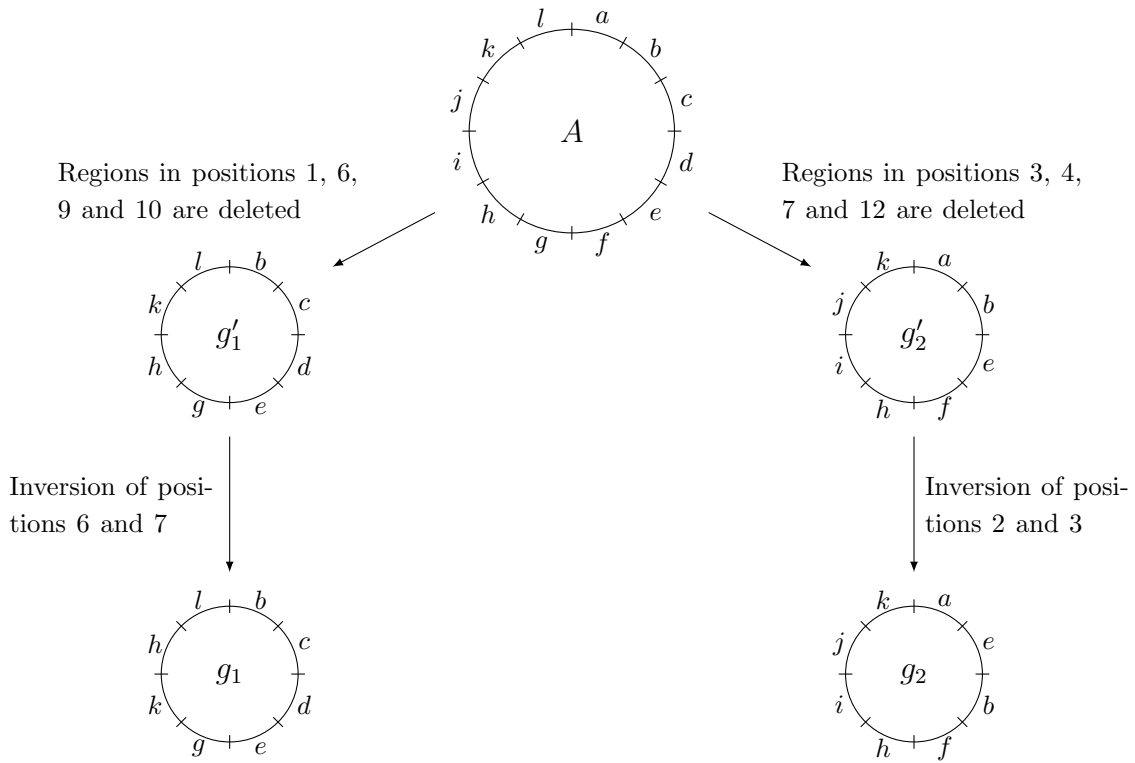
\begin{figure}[h]
\begin{center}
\begin{tikzpicture}[scale = 0.9]
      \draw (0,0) circle (1.5cm);
      \draw (0,0) node {$A$};
      \foreach \x in {0,30,60,...,330} \draw (\x:1.4cm) -- (\x:1.6cm);
      \def\rad{1.73cm}
      \draw (75:\rad) node[] {\footnotesize $a$};
      \draw (45:\rad) node[] {\footnotesize $b$};
      \draw (15:\rad) node[] {\footnotesize $c$};
      \draw (-15:\rad) node[] {\footnotesize $d$};
      \draw (-45:\rad) node[] {\footnotesize $e$};
      \draw (-75:\rad) node[] {\footnotesize $f$};
      \draw (-105:\rad) node[] {\footnotesize $g$};
      \draw (-135:\rad) node[] {\footnotesize $h$};
      \draw (-165:\rad) node[] {\footnotesize $i$};
      \draw (-195:\rad) node[] {\footnotesize $j$};
      \draw (-225:\rad) node[] {\footnotesize $k$};
      \draw (-255:\rad) node[] {\footnotesize $l$};
      \draw[->,>=latex](2,-1.2)-- node[above right =2mm, text width = 4cm] {\footnotesize{Regions in positions 3, 4, 7 and 12 are deleted}} (3.5,-2);
      \draw[->,>=latex](-2,-1.2)-- node[above left=2mm, text width = 4cm] {\footnotesize{Regions in positions 1, 6, 9 and 10 are deleted}} (-3.5,-2);
\begin{scope}[xshift=5cm,yshift=-3cm]
      \draw (0,0) circle (1cm);
      \draw (0,0) node {$g_2^\prime$};
      \foreach \x in {0,45,90,...,315} \draw (\x:.9cm) -- (\x:1.1cm);
      \def\rad{1.18cm}
      \draw (67.5:\rad) node[] {\footnotesize $a$}; %subtract 45 degrees going clockwise around.
      \draw (22.5:\rad) node[] {\footnotesize $b$};
      \draw (-22.5:\rad) node[] {\footnotesize $e$};  
      \draw (-67.5:\rad) node[] {\footnotesize $f$};
      \draw (-112.5:\rad) node[] {\footnotesize $h$};  
      \draw (-157.5:\rad) node[] {\footnotesize $i$};
      \draw (-202.5:\rad) node[] {\footnotesize $j$};  
      \draw (-247.5:\rad) node[] {\footnotesize $k$};  
      \draw[->,>=latex](0,-1.5)-- node[right, text width = 3cm] {\footnotesize{Inversion of positions 2 and 3}} (0,-3.5);
\end{scope}
\begin{scope}[xshift=-5cm,yshift=-3cm]
      \draw (0,0) circle (1cm);
\draw (0,0) node {$g_1^\prime$};
      \foreach \x in {0,45,90,...,315} \draw (\x:.9cm) -- (\x:1.1cm);
      \def\rad{1.18cm}
      \draw (67.5:\rad) node[] {\footnotesize $b$}; %subtract 45 degrees going clockwise around.
      \draw (22.5:\rad) node[] {\footnotesize $c$};
      \draw (-22.5:\rad) node[] {\footnotesize $d$};  
      \draw (-67.5:\rad) node[] {\footnotesize $e$};
      \draw (-112.5:\rad) node[] {\footnotesize $g$};  
      \draw (-157.5:\rad) node[] {\footnotesize $h$};
      \draw (-202.5:\rad) node[] {\footnotesize $k$};  
      \draw (-247.5:\rad) node[] {\footnotesize $l$};  
      \draw[->,>=latex](0,-1.5)-- node[left = -0.25cm, text width = 3cm] {\footnotesize{Inversion of positions 6 and 7}} (0,-3.5);

\end{scope}
\begin{scope}[xshift=5cm,yshift=-8cm]
      \draw (0,0) circle (1cm);
      \draw (0,0) node {$g_2$};
      \foreach \x in {0,45,90,...,315} \draw (\x:.9cm) -- (\x:1.1cm);
      \def\rad{1.18cm}
      \draw (67.5:\rad) node[] {\footnotesize $a$}; %subtract 45 degrees going clockwise around.
      \draw (22.5:\rad) node[] {\footnotesize $e$};
      \draw (-22.5:\rad) node[] {\footnotesize $b$};  
      \draw (-67.5:\rad) node[] {\footnotesize $f$};
      \draw (-112.5:\rad) node[] {\footnotesize $h$};  
      \draw (-157.5:\rad) node[] {\footnotesize $i$};
      \draw (-202.5:\rad) node[] {\footnotesize $j$};  
      \draw (-247.5:\rad) node[] {\footnotesize $k$};  
\end{scope}
\begin{scope}[xshift=-5cm,yshift=-8cm]
      \draw (0,0) circle (1cm);
      \draw (0,0) node {$g_1$};
      \foreach \x in {0,45,90,...,315} \draw (\x:.9cm) -- (\x:1.1cm);
      \def\rad{1.18cm}
      \draw (67.5:\rad) node[] {\footnotesize $b$}; %subtract 45 degrees going clockwise around.
      \draw (22.5:\rad) node[] {\footnotesize $c$};
      \draw (-22.5:\rad) node[] {\footnotesize $d$};  
      \draw (-67.5:\rad) node[] {\footnotesize $e$};
      \draw (-112.5:\rad) node[] {\footnotesize $g$};  
      \draw (-157.5:\rad) node[] {\footnotesize $k$};
      \draw (-202.5:\rad) node[] {\footnotesize $h$};  
      \draw (-247.5:\rad) node[] {\footnotesize $l$};  

\end{scope}

\end{tikzpicture}
\end{center}
\caption{An example of frames of reference $g_1$ and $g_2$ of $G_1$ and $G_2$ arising from an ancestor $A$ with the deletions occurring first, followed by inversions. After the deletions but prior to inversions there are intermediate reference frames $g_1^\prime$ and $g_2^\prime$ of genomes $G_1^\prime$ and $G_2^\prime$.}
\label{fig:example}
\end{figure}

\section{The Symmetric Group, the Symmetric Inverse Monoid and their Generalisations}\label{s:alg.prelim}

To model the inversion/deletion process and formalise the notion of a distance between genomes we use the machinery of the symmetric group, the symmetric inverse monoid and their generalisations. Throughout we let $\mathbf{n} = \{1, \dots, n\}$ for all positive integers $n$ (where $\mathbf{0} = \varnothing$),  let $\N = \{0, 1, \dots \}$ and $\N^+ = \N \setminus \{0\}$, and let the restriction of a map $f$ to a subset $X$ of its domain be denoted by $f|_X$. 

\begin{definition}\label{d:partial.perm}
Let $X$, $Y$ and $X^{\prime}$ be sets where $X^{\prime} \subseteq X$. A \emph{partial permutation} with domain $X^{\prime}$ from $X$ to $Y$ is an injection $f|_{X^{\prime}} : X^{\prime} \to Y$ where $(x)f= (x)f|_{X^\prime}$ for all $x \in X^\prime$ and where $(x)f$ is undefined for all $x\in X\setminus X^\prime$. The \emph{domain} of $f$ is denoted $\dom(f)$, while $\im(f)$ is the \emph{image} of $X^{\prime}$ under $f|_{X^{\prime}}$.
\end{definition}

For a monoid $M$ the \emph{inverse} of $m\in M$ is the unique $m^{-1} \in M$ such that $mm^{-1}m = m$ and $m^{-1}mm^{-1} = m^{-1}$. If all elements of $M$ have an inverse in this sense, then $M$ is an \emph{inverse monoid}. The set of partial permutations from $\mathbf{n}$ to itself, which is denoted $\mathcal{I}_{n}$, is an inverse monoid called the \emph{symmetric inverse monoid} whose identity is the identity map. We will also consider the set $\mathcal{I}_{m,n}$ of partial permutations from the set $\mathbf{m}$ to the set $\mathbf{n}$ for all $m,n \in \N$, where if $m = n$ we write $\mathcal{I}_n = \mathcal{I}_{n,n}$. These partial permutations will be used to represent the relative positions of conserved regions that appear in two circular bacterial genomes, and to represent inversion/deletion operations.

The \emph{symmetric inverse category}, denoted $\mathcal{I}$,  is the (small) category whose objects are the natural numbers and where the set of arrows from $m$ to $n$ is $\mathcal{I}_{m,n}$. For partial permutations $f \in \mathcal{I}_{m,n}$ and $g \in \mathcal{I}_{n,p}$, their composition $fg \in \mathcal{I}_{m,p}$ is such that, for all $i \in \dom(f)$, if $(i)f \in \im(f)\cap\dom(g)$ then $(i)fg = \big((i)f\big)g$ and if $(i)f \not \in \im(f)\cap\dom(g)$ then $i \not \in \dom(fg)$. 

The \emph{diagram} of $f \in \mathcal{I}_{m,n}$ is formed by arranging $m$ vertices labelled by elements of $\{1, \dots, m\}$ above $n$ vertices labelled by elements of $\{1, \dots, n\}$ forming two parallel rows of vertices. If $(i)f = j$ then there is an edge connecting $i$ in the upper row with $j$ in the lower row of the diagram (as in Figure \ref{fig:ppermdiagram}). 

\begin{figure}[h]
\begin{center}
\begin{tikzpicture}[scale = 0.5, thick, every node/.style={scale=0.75}]

\foreach \i in {1,...,4}
{
\node[circle,fill=black,inner sep=1pt,minimum size=3pt, label = below: $\i$] (l\i) at (\i,0) {};
}

\foreach \i in {1,...,5}
{
\node[circle,fill=black,inner sep=1pt,minimum size=3pt, label = above: $\i$] (u\i) at (\i,2) {};
}

\draw (2,2) -- (4,0);
\draw (4,2) -- (2,0);
\draw (5,2) -- (3,0);

\end{tikzpicture}
\caption[a]{A partial permutation $f$ in $\mathcal{I}_{5,4}$ with $\dom(f) = \{2,4,5\}$ and $\im(f) = \{2,3,4\}$.}
\label{fig:ppermdiagram}
\end{center}
\end{figure}
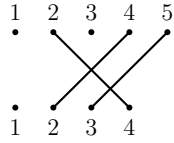

Using the diagrams of $f$ and $g$ it is often helpful to view their composition diagrammatically by first associating the vertices in the lower row of the diagram of $f$ with those in the upper row of the diagram of $g$, forming a graph called the product graph (see Figure \ref{fig:ppermprod}). If there is a path from $i$ in the upper row of the product graph to $j$ in the lower row then $(i)fg = j$.

\begin{figure}[h]
\begin{center}
\begin{tikzpicture}[scale = 0.5, thick, every node/.style={scale=0.75}]

\foreach \i in {1,...,5}
{
\node[circle,fill=black,inner sep=1pt,minimum size=3pt] (l\i) at (\i,0) {};
}

\foreach \i in {1,...,5}
{
\node[circle,fill=black,inner sep=1pt,minimum size=3pt, label = above: $\i$] (u\i) at (\i,2) {};
}

\draw (1,2) -- (4,0);
\draw (2,2) -- (2,0);
\draw (4,2) -- (3,0);
\draw (5,2) -- (5,0);

\node at (0,1) {$f=$};
\begin{scope}[yshift = -2cm]
\node at (0,1) {$g=$};

\foreach \i in {1,...,4}
{
\node[circle,fill=black,inner sep=1pt,minimum size=3pt, label = below: $\i$] (ll\i) at (\i,0) {};
}

\draw (1,2) -- (1,0);
\draw (2,2) -- (4,0);
\draw (3,2) -- (2,0);
\draw (5,2) -- (3,0);
\end{scope}

\begin{scope}[xshift = 6.5cm, yshift = -1cm]
\foreach \i in {1,...,4}
{
\node[circle,fill=black,inner sep=1pt,minimum size=3pt, label = below: $\i$] (l\i) at (\i,0) {};
}

\foreach \i in {1,...,5}
{
\node[circle,fill=black,inner sep=1pt,minimum size=3pt, label = above: $\i$] (u\i) at (\i,2) {};
}

\draw (2,2) -- (4,0);
\draw (4,2) -- (2,0);
\draw (5,2) -- (3,0);

\node at (6,1) {$=fg$};
\end{scope}
\draw[->] (5.5, 0) -- (7, 0);
\end{tikzpicture}
\caption{Calculating the product of partial permutations $f \in \mathcal{I}_{5,5}$ and $g \in \mathcal{I}_{5,4}$.}
\label{fig:ppermprod}
\end{center}
\end{figure}
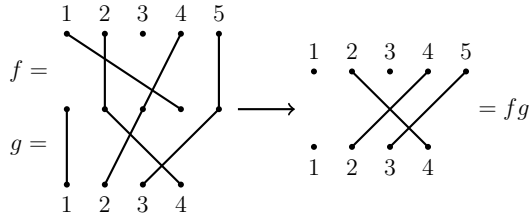

A partial permutation $f \in \mathcal{I}_{m,n}$ is said to be \emph{order preserving} if, for all $i,j \in \dom(f)$, we have $i < j$ if and only if $(i)f < (j)f$. Instances where $i < j$ but $(i)f > (j)f$ are called \emph{crossings}. The set of order preserving elements of $\mathcal{I}_{m,n}$ is denoted by $\mathcal{POI}_{m,n}$. A partial permutation $f \in \mathcal{I}_{m,n}$ with $\dom(f) = \{x_1, \dots, x_k\}$ is said to be \emph{orientation preserving} (cf. \citep{mcalister1998semigroups, catarino1999monoid}) if the sequence $\left((x_1)f, \dots, (x_k)f\right)$ is cyclic, in the sense that there exists at most one index $i \in \mathbf{k}$ such that $(x_i)f > \left(x_{i+1 \mod k}\right)f$. The set of orientation preserving elements of $\mathcal{I}_{m,n}$ is denoted $\mathcal{POPI}_{m,n}$. Order preserving partial permutations will arise when regions common to two genomes appear in the same order reading from position 1 to position $n$, while orientation preserving partial permutations will arise when these regions appear in the same (clockwise) cyclic order in both genomes. 

\section{An Algebraic Model of Inversions and Deletions}\label{sec:invdelmodel}

Given a reference frame of a genome $G$ specified by a bijection $\lambda: R \to \n$, inversions and deletions acting on $G$ are modelled by composing on the right of $\lambda$ by certain elements of the symmetric inverse category $\mathcal{I}$. For all $n \in \N^+$ let $s_{i;n}$ be the adjacent transposition $(i,i+1)$ in the symmetric group $S_n$ for all $1 \leq i \leq n-1$ and, to account for the circular nature of $G$, we also consider the 2-cycle $s_{n;n} = (1,n)$ since positions 1 and $n$ are adjacent in $G$. Letting
\[
\mathcal{T}_n = \{s_{i;n} : i \in \n\}, 
\]
composing on the right of $\lambda$ by elements of $\mathcal{T}_n$ will represent an inversion interchanging two adjacent regions in $G$. Note that the term ``inversion'' is used to refer to elements of $\mathcal{T}_n$ as well as the evolutionary operations they represent.

To model deletions, suppose $n \geq 2$ and let $d_{i;n}$ be the unique order preserving map in $\mathcal{POI}_{n, n-1}$ with $\dom(d_{i;n}) = \n \setminus \{i\}$ and $\im(d_{i;n}) = \{1, \dots, n-1\}$ (see Figure \ref{fig:deldiagram} for an example). 

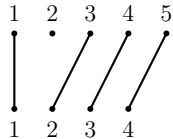
\begin{figure}[h]
\begin{center}
\begin{tikzpicture}[scale = 0.5, thick, every node/.style={scale=0.75}]

\foreach \i in {1,...,4}
{
\node[circle,fill=black,inner sep=1pt,minimum size=3pt, label= below:\i] (a) at (\i,0) {};
}
\foreach \i in {1,...,5}
{
\node[circle,fill=black,inner sep=1pt,minimum size=3pt, label= above:\i] (a) at (\i,2) {};
}

\draw (1,2) -- (1,0);
\draw (3,2) -- (2,0);
\draw (4,2) -- (3,0);
\draw (5,2) -- (4,0);
\end{tikzpicture}
\end{center}
\vspace*{-5mm}
\caption{The partial permutation diagram of $d_{2;5} \in \mathcal{I}_{5, 4}$. Note that this is still an injective map between the regions that are preserved, with the vertex corresponding to the position of the deleted region having degree 0.}
\label{fig:deldiagram}
\end{figure}

Letting 
\[
\mathcal{D}_n = \{d_{i;n} : i \in \n\},
\]
composing on the right of $\lambda$ by $d_{i;n} \in \mathcal{D}_n$ will represent deleting the region appearing in position $i$. Composing by a deletion yields a partial permutation from $R$ to $\n$, where a region $x$ is not in the domain if it has been deleted. Note that after we compose on the right by $d_{i;n}$, for all $j > i$ the region that appeared in position $j$ now appears in position $j-1$. For all $j < i$ the region appearing in position $j$ remains in that position. Figure \ref{fig:invedelcompexample} illustrates the corresponding compositions of deletions and inversions yielding the reference frame $g_2$ from the genome $A$ in Figure \ref{fig:example}.

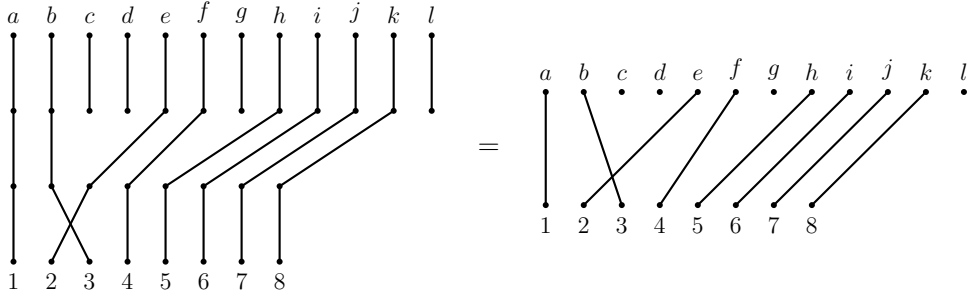
\begin{figure}[h]
\begin{center}
\begin{tikzpicture}[scale = 0.5, thick, every node/.style={scale=0.75}]
\foreach \i in {1,...,12}
{
\node[circle,fill=black,inner sep=1pt,minimum size=3pt] (a) at (\i,0) {};
}
\node[circle,fill=black,inner sep=1pt,minimum size=3pt, label= above:$a$] (a) at (1,2) {};
\node[circle,fill=black,inner sep=1pt,minimum size=3pt, label= above:$b$] (a) at (2,2) {};
\node[circle,fill=black,inner sep=1pt,minimum size=3pt, label= above:$c$] (a) at (3,2) {};
\node[circle,fill=black,inner sep=1pt,minimum size=3pt, label= above:$d$] (a) at (4,2) {};
\node[circle,fill=black,inner sep=1pt,minimum size=3pt, label= above:$e$] (a) at (5,2) {};
\node[circle,fill=black,inner sep=1pt,minimum size=3pt, label= above:$f$] (a) at (6,2) {};
\node[circle,fill=black,inner sep=1pt,minimum size=3pt, label= above:$g$] (a) at (7,2) {};
\node[circle,fill=black,inner sep=1pt,minimum size=3pt, label= above:$h$] (a) at (8,2) {};
\node[circle,fill=black,inner sep=1pt,minimum size=3pt, label= above:$i$] (a) at (9,2) {};
\node[circle,fill=black,inner sep=1pt,minimum size=3pt, label= above:$j$] (a) at (10,2) {};
\node[circle,fill=black,inner sep=1pt,minimum size=3pt, label= above:$k$] (a) at (11,2) {};
\node[circle,fill=black,inner sep=1pt,minimum size=3pt, label= above:$l$] (a) at (12,2) {};

\foreach \j in {1,...,12}
{
\draw (\j, 2) -- (\j, 0);
}

\foreach \i in {1,...,8}
{
\node[circle,fill=black,inner sep=1pt,minimum size=3pt] (a) at (\i,-2) {};
}

\foreach \i in {1,...,8}
{
\node[circle,fill=black,inner sep=1pt,minimum size=3pt, label=below:$\i$] (a) at (\i,-4) {};
}

\foreach \i in {1,2}
{
\draw(\i, 0) -- (\i, -2);
}
\foreach \i in {5,6}
{
\draw(\i, 0) -- (\i-2, -2);
}
\foreach \i in {8,9,10,11}
{
\draw(\i, 0) -- (\i-3, -2);
}
\foreach \i in {1,4,5,6,7,8}
{
\draw(\i, -2) -- (\i, -4);
}
\draw(2, -2) -- (3, -4);
\draw(3, -2) -- (2, -4);

\node at (13.5, -1) {\large{$=$}};

\begin{scope}[xshift = 14cm, yshift = -1.5cm]
\foreach \i in {1,...,8}
{
\node[circle,fill=black,inner sep=1pt,minimum size=3pt, label=below:$\i$] (a) at (\i,-1) {};
}
\node[circle,fill=black,inner sep=1pt,minimum size=3pt, label= above:$a$] (a) at (1,2) {};
\node[circle,fill=black,inner sep=1pt,minimum size=3pt, label= above:$b$] (a) at (2,2) {};
\node[circle,fill=black,inner sep=1pt,minimum size=3pt, label= above:$c$] (a) at (3,2) {};
\node[circle,fill=black,inner sep=1pt,minimum size=3pt, label= above:$d$] (a) at (4,2) {};
\node[circle,fill=black,inner sep=1pt,minimum size=3pt, label= above:$e$] (a) at (5,2) {};
\node[circle,fill=black,inner sep=1pt,minimum size=3pt, label= above:$f$] (a) at (6,2) {};
\node[circle,fill=black,inner sep=1pt,minimum size=3pt, label= above:$g$] (a) at (7,2) {};
\node[circle,fill=black,inner sep=1pt,minimum size=3pt, label= above:$h$] (a) at (8,2) {};
\node[circle,fill=black,inner sep=1pt,minimum size=3pt, label= above:$i$] (a) at (9,2) {};
\node[circle,fill=black,inner sep=1pt,minimum size=3pt, label= above:$j$] (a) at (10,2) {};
\node[circle,fill=black,inner sep=1pt,minimum size=3pt, label= above:$k$] (a) at (11,2) {};
\node[circle,fill=black,inner sep=1pt,minimum size=3pt, label= above:$l$] (a) at (12,2) {};

\draw(1,2) -- (1, -1);
\draw(2,2) -- (3, -1);
\draw(5, 2) -- (2, -1);
\draw(6,2) -- (4, -1);

\foreach \i in {8,9,10,11}
{
\draw(\i, 2) -- (\i-3, -1);
}
\end{scope}
\end{tikzpicture}
\end{center}
\caption{Given the reference frame of $A$ from Figure \ref{fig:example}, which is represented by the bijection where $a \mapsto 1$, $b \mapsto 2$ and so on, the deletions of the regions at positions 3,4,7 and 12 is given by composing on the right by the (non-unique) term $d_{12;12}d_{7;11}d_{4;10}d_{3;9}$. The subsequent inversion of positions 2 and 3 yielding $g_2$ is represented by composing on the right by $s_{2;8}$. After these operations $a$ is in position 1, $e$ is in position 2 and so on.}
\label{fig:invedelcompexample}
\end{figure}

Let $G_1$ and $G_2$ be arbitrary genomes and suppose that $G_2$ can be obtained from $G_1$ by inversions and deletions (note that we are not considering the most recent common ancestor of $G_1$ and $G_2$ here). For a fixed reference pair $(\lambda_{G_1}, \lambda_{G_2}) \in G_1 \times G_2$ a parsimonious inversion/deletion sequence transforming $\lambda_{G_1}$ into $\lambda_{G_2}$, when it exists, corresponds to a minimum length well-defined product $u$ of elements in 
\[
\mathcal{X} = \bigcup_{i \in \N^+} (\mathcal{D}_{i+1} \cup \mathcal{T}_{i})
\]
such that the bijection $\left(\lambda_{G_1}u\right)|_{\dom\left(\lambda_{G_1}u\right)}$ is equal to $\lambda_{G_2}$. Given a reference frame $\lambda_G$ of any genome $G$ and a well-defined product $u$ of elements in $\mathcal{X}$ we let 
\[
\left(\lambda_{G}u\right)|_{\dom\left(\lambda_{G}u\right)} = \overline{\lambda_{G}u}
\]
and let the length of $u$ be denoted by $\ell(u)$. For a fixed reference frame $\lambda_{G_1}$ of $G_1$ the quantity
\[
d(\lambda_{G_1}, G_2) = \min\left\{\ell(u) : \overline{\lambda_{G_1}u} \in G_2\right\}
\]
represents the length of a parsimonious inversion/deletion sequence transforming $G_1$ into $G_2$ beginning with the reference frame $\lambda_{G_1}$, while the quantity 
\[
d(G_1, G_2) = \min\left\{d(\lambda_{G_1}, G_2) : \lambda_{G_1} \in G_1\right\}
\]
represents the minimal inversion/deletion distance from $G_1$ to $G_2$. 

We now work towards establishing Lemma \ref{lem:del.first2} from which it follows, for a fixed reference frame $\lambda_{G_1}$ of $G_1$, that there exists a reference frame $\lambda_{G_2}$ of $G_2$ and a minimum length inversion/deletion sequence transforming $\lambda_{G_1}$ into $\lambda_{G_2}$ where the deletions occur first.

We proceed by first defining a digraph $\Delta$ whose paths represent the possible sequences of inversions, deletions, rotations and reflections of a genome that can occur. The digraph $\Delta$ (see Figure \ref{fig:localview}) has
\begin{itemize}
\item vertex set $\mathbb{N}$;
\item a directed edge from $n$ to $n$ for each element of $\mathcal{T}_n$ representing inversions for all $n \in \N^+$;
\item a directed edge from $n+1$ to $n$ for each element of $\mathcal{D}_n$ representing deletions for all $n \in \N^+$.
\end{itemize}
The digraph $\Delta$ also has a directed edge from $n$ to $n$ for all $n \in \N^+$ labelled by $c_n$ representing the $n$-cycle rotation $(1, \dots, n)$ in $S_n$, along with an edge labelled by $\alpha_n$ representing a reflection where 
\[
\alpha_n = \begin{dcases*}
(1,n)(2, n-1)\cdots(k,k+1) &  if $n = 2k$,\\
(1,n)(2,n-1)\cdots(k, k+2) & if $n= 2k+1$.
\end{dcases*}
\]
Note that the dihedral group $D_n$ is generated by $\{c_n, \alpha_n\}$. 

\begin{figure}[h]
\begin{center}
\begin{tikzpicture}[thick, every node/.style={scale=0.75}, yscale = 1.1, xscale = 1.3]

\node[circle, line width= 0.4pt, draw = black, inner sep = 2pt] (v1) at (0, 0) {$i-1$};
\node[circle, line width= 0.4pt, draw = black, inner sep = 2pt] (v2) at (2, 0) {\phantom{$i+1$}};
\node at (2, 0) {$i$};
\node[circle, line width= 0.4pt, draw = black, inner sep = 2pt] (v3) at (4, 0) {$i+1$};
\phantom{\node[circle, line width= 0.4pt, draw = black, inner sep = 2pt] (v4) at (-2, 0) {$i+1$};}
\phantom{\node[circle, line width= 0.4pt, draw = black, inner sep = 2pt] (v5) at (6, 0) {$i+1$};}
\node at (-2, 0){$\cdots$};
\node at (6, 0){$\cdots$};
\node at (3,0.075){$\vdots$};
\node at (1,0.075){$\vdots$};
\node at (4.025, 0.65){\small{$\cdots$}};
\node at (2.025, 0.65){\small{$\cdots$}};
\node at (0.025, 0.65){\small{$\cdots$}};

\draw[->, >=stealth](v3) to[bend right = 20] node[midway, above]{\small{$d_{1;i+1}$}} (v2);
\draw[->, >=stealth](v3) to[bend left = 20] node[midway, below]{\small{$d_{i+1;i+1}$}} (v2);
\draw[->, >=stealth](v2) to[bend right = 20] node[midway, above]{\small{$d_{1;i}$}} (v1);
\draw[->, >=stealth](v2) to[bend left = 20] node[midway, below]{\small{$d_{i;i}$}} (v1);
\draw[->, >=stealth](v1) to[bend right = 20] node[midway, above]{\small{$d_{1;i-1}$}} (v4);
\draw[->, >=stealth](v1) to[bend left = 20] node[midway, below]{\small{$d_{i-1;i-1}$}} (v4);
\draw[->, >=stealth](v5) to[bend right = 20] node[midway, above]{\small{$d_{1;i+2}$}} (v3);
\draw[->, >=stealth](v5) to[bend left = 20] node[midway, below]{\small{$d_{i+2;i+2}$}} (v3);

\draw[->, >=stealth](v2) to[out = 135, in = 105, looseness = 10]  node[midway, above, xshift = 0.1cm]{\small{$s_{1;i}$}} (v2);
\draw[->, >=stealth](v2) to[out = 45, in = 75, looseness = 10]  node[midway, above, xshift = 0.1cm]{\small{$s_{i;i}$}} (v2);
\draw[->, >=stealth](v2) to[out = -45, in = -75, looseness = 10]  node[midway, below, xshift = 0.1cm]{\small{$c_i$}} (v2);
\draw[->, >=stealth](v2) to[out = -135, in = -105, looseness = 10]  node[midway, below, xshift = 0.1cm]{\small{$\alpha_i$}} (v2);

\draw[->, >=stealth](v3) to[out = 135, in = 105, looseness = 10]  node[midway, above, xshift = 0.1cm]
{\small{$s_{1;i+1}$}} (v3);
\draw[->, >=stealth](v3) to[out = 45, in = 75, looseness = 10]  node[midway, above, xshift = 0.1cm]{\small{$s_{i+1;i+1}$}} (v3);
\draw[->, >=stealth](v3) to[out = -45, in = -75, looseness = 10]  node[midway, below, xshift = 0.1cm]{\small{$c_{i+1}$}} (v3);
\draw[->, >=stealth](v3) to[out = -135, in = -105, looseness = 10]  node[midway, below, xshift = 0.1cm]{\small{$\alpha_{i+1}$}} (v3);

\draw[->, >=stealth](v1) to[out = 135, in = 105, looseness = 10]  node[midway, above, xshift = 0.1cm]
{\small{$s_{1;i-1}$}} (v1);
\draw[->, >=stealth](v1) to[out = 45, in = 75, looseness = 10]  node[midway, above, xshift = 0.1cm]{\small{$s_{i-1;i-1}$}} (v1);
\draw[->, >=stealth](v1) to[out = -45, in = -75, looseness = 10]  node[midway, below, xshift = 0.1cm]{\small{$c_{i-1}$}} (v1);
\draw[->, >=stealth](v1) to[out = -135, in = -105, looseness = 10]  node[midway, below, xshift = 0.1cm]{\small{$\alpha_{i-1}$}} (v1);
\end{tikzpicture}
\end{center}
\caption{A local view of the digraph $\Delta$.}
\label{fig:localview}
\end{figure}
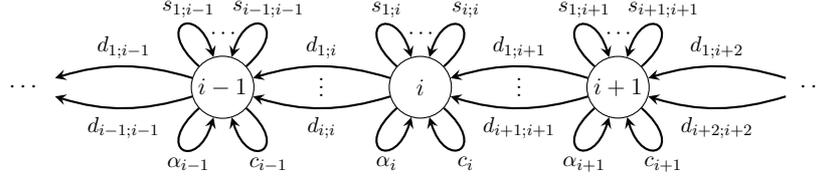

The \emph{free category} $\Delta^\ast$ on $\Delta$ contains all words  over the alphabet 
\[
\bigcup_{i \in \N^+} (\mathcal{D}_{i+1} \cup \mathcal{T}_{i} \cup \{c_i, \alpha_i\})
\]
corresponding to paths in $\Delta$ (note that edges may be traversed more than once if possible) that represent sequences of inversions, deletions, rotations and reflections. It can be verified (with the aid of diagrams as in Figure \ref{fig:relexamples} or using the presentation of the symmetric inverse category by \citet{east2020presentations}) that the following relations are satisfied by the corresponding partial permutations in $\mathcal{I}$ for all meaningful values of $n$, subject to stated constraints:
\begin{align*}
&\phantomseven d_{j;n}s_{n-1;n-1}               &&\hspace{-2cm}\text{if $i = n$ and $1 < j < n$}\ltag{(R1)}\\
&\phantomseven d_{1;n}c_{n-1}                   &&\hspace{-2cm}\text{if $i = j = n$}\ltag{(R2)}\\
&\phantomseven d_{n;n}c_{n-1}^{n-1}             &&\hspace{-2cm}\text{if $i = n$ and $j = 1$}\ltag{(R3)}\\
s_{i;n}d_{j;n} &= \caseseven d_{j;n}s_{i-1;n-1} &&\hspace{-2cm}\text{if $i > j$ and $i \leq n-1$}\ltag{(R4)}\\
&\phantomseven d_{j;n}s_{i;n-1}                 &&\hspace{-2cm} \text{if $i+1 < j$}\ltag{(R5)}\\
&\phantomseven d_{j+1;n}                        &&\hspace{-2cm}\text{if $i = j$ and $i \leq n-1$}\ltag{(R6)}\\
&\phantomseven d_{i;n}                          &&\hspace{-2cm}\text{if $i+1 = j$ and $i \leq n-1$} \ltag{(R7)}\\[2mm]
\lefttermtwo{c_ns_{i;n}} s_{i-1;n}c_n           &&\hspace{-2cm} \text{if $2 \leq i \leq n$}\ltag{(R8)}\\
&\phantomtwo s_{n;n}c_{n}                       &&\hspace{-2cm}\text{if $i = 1$} \ltag{(R9)}\\[2mm]
\lefttermtwoC{c_nd_{i;n}} d_{i-1;n}c_{n-1}      &&\hspace{-2cm} \text{if $2 \leq i \leq n$}\ltag{(R10)}\\
&\phantomtwoC d_{n;n}                           &&\hspace{-2cm}\text{if $i=1$,} \ltag{(R11)}\\[2mm]
\alpha_nd_{i;n} &=  d_{n-i+1;n}\alpha_{n-1}     &&\hspace{-2cm} \text{if $i \in \n$}\ltag{(R12)}\\[2mm]
\lefttermtwo{\alpha_ns_{i,n}}  s_{n-i;n}\alpha_n&&\hspace{-2cm} \text{if $1 \leq i \leq n-1$}\ltag{(R13)}\\
&\phantomtwo s_{n;n}\alpha_n                    &&\hspace{-2cm}\text{if $i = n$.} \ltag{(R14)}
\end{align*}

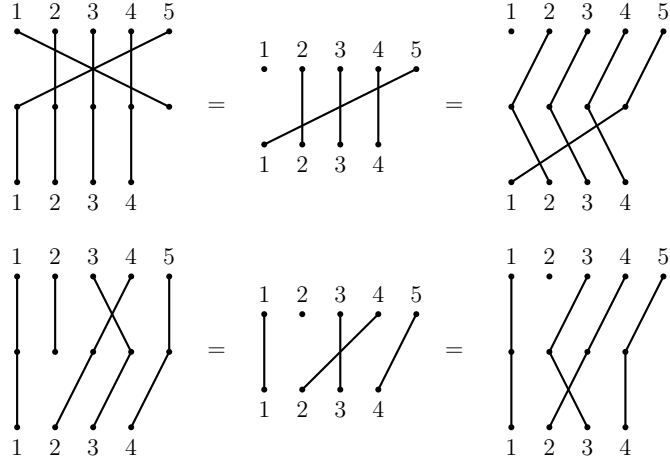
\begin{figure}[h]
\begin{center}
\begin{tikzpicture}[scale = 0.5, thick, every node/.style={scale=0.75}]

\foreach \i in {1,...,4}
{
\node[circle,fill=black,inner sep=1pt,minimum size=3pt, label= below:\i] (a) at (\i,0) {};
}
\foreach \i in {1,...,5}
{
\node[circle,fill=black,inner sep=1pt,minimum size=3pt] (a) at (\i,2) {};
\node[circle,fill=black,inner sep=1pt,minimum size=3pt, label= above:\i] (a) at (\i,4) {};
}
\draw (1,4) -- (1,2);
\draw (2,4) -- (2,2);
\draw (3,4) -- (4,2);
\draw (4,4) -- (3,2);
\draw (5,4) -- (5,2);

\draw (1,2) -- (1,0);
\draw (3,2) -- (2,0);
\draw (4,2) -- (3,0);
\draw (5,2) -- (4,0);

\node at (6.25, 2){$=$};

\begin{scope}[xshift = 6.5cm, yshift = 1cm]
\foreach \i in {1,...,4}
{
\node[circle,fill=black,inner sep=1pt,minimum size=3pt, label= below:\i] (a) at (\i,0) {};
}
\foreach \i in {1,...,5}
{
\node[circle,fill=black,inner sep=1pt,minimum size=3pt, label= above:\i] (a) at (\i,2) {};
}

\draw (1,2) -- (1,0);
\draw (3,2) -- (3,0);
\draw (4,2) -- (2,0);
\draw (5,2) -- (4,0);
\end{scope}

\node at (12.5, 2){$=$};

\begin{scope}[xshift = 13cm]
\foreach \i in {1,...,4}
{
\node[circle,fill=black,inner sep=1pt,minimum size=3pt, label= below:\i] (a) at (\i,0) {};
\node[circle,fill=black,inner sep=1pt,minimum size=3pt] (a) at (\i,2) {};
}
\foreach \i in {1,...,5}
{
\node[circle,fill=black,inner sep=1pt,minimum size=3pt, label= above:\i] (a) at (\i,4) {};
}
\draw (1,2) -- (1,0);
\draw (2,2) -- (3,0);
\draw (3,2) -- (2,0);
\draw (4,2) -- (4,0);

\draw (1,4) -- (1,2);
\draw (3,4) -- (2,2);
\draw (4,4) -- (3,2);
\draw (5,4) -- (4,2);
\end{scope}

\begin{scope}[yshift = 6.5cm]
\foreach \i in {1,...,4}
{
\node[circle,fill=black,inner sep=1pt,minimum size=3pt, label= below:\i] (a) at (\i,0) {};
}
\foreach \i in {1,...,5}
{
\node[circle,fill=black,inner sep=1pt,minimum size=3pt] (a) at (\i,2) {};
\node[circle,fill=black,inner sep=1pt,minimum size=3pt, label= above:\i] (a) at (\i,4) {};
}
\draw (1,4) -- (5,2);
\draw (2,4) -- (2,2);
\draw (3,4) -- (3,2);
\draw (4,4) -- (4,2);
\draw (5,4) -- (1,2);

\draw (1,2) -- (1,0);
\draw (2,2) -- (2,0);
\draw (3,2) -- (3,0);
\draw (4,2) -- (4,0);

\node at (6.25, 2){$=$};

\begin{scope}[xshift = 6.5cm, yshift = 1cm]
\foreach \i in {1,...,4}
{
\node[circle,fill=black,inner sep=1pt,minimum size=3pt, label= below:\i] (a) at (\i,0) {};
}
\foreach \i in {1,...,5}
{
\node[circle,fill=black,inner sep=1pt,minimum size=3pt, label= above:\i] (a) at (\i,2) {};
}

\draw (4,2) -- (4,0);
\draw (2,2) -- (2,0);
\draw (3,2) -- (3,0);
\draw (5,2) -- (1,0);
\end{scope}

\node at (12.5, 2){$=$};

\begin{scope}[xshift = 13cm]
\foreach \i in {1,...,4}
{
\node[circle,fill=black,inner sep=1pt,minimum size=3pt, label= below:\i] (a) at (\i,0) {};
\node[circle,fill=black,inner sep=1pt,minimum size=3pt] (a) at (\i,2) {};
}
\foreach \i in {1,...,5}
{
\node[circle,fill=black,inner sep=1pt,minimum size=3pt, label= above:\i] (a) at (\i,4) {};
}
\draw (1,2) -- (2,0);
\draw (2,2) -- (3,0);
\draw (3,2) -- (4,0);
\draw (4,2) -- (1,0);

\draw (2,4) -- (1,2);
\draw (3,4) -- (2,2);
\draw (4,4) -- (3,2);
\draw (5,4) -- (4,2);
\end{scope}

\end{scope}
\end{tikzpicture}
\end{center}
\vspace*{-5mm}
\caption{Diagrammatic illustration of the relation R2 (top row) with $s_{5;5}d_{5;5} = d_{1;5}c_{4}$ and R4 (bottom row) with $s_{3;5}d_{2;5} = d_{2;5}s_{2;4}$.}
\label{fig:relexamples}
\end{figure}

\begin{lem}\label{lem:del.first2}
Let $G_1$ be a circular genome with region set $R_1$ of size $m$ and let $G_2$ be a circular genome with region set $R_2$ of size $n$ where $R_2 \subset R_1$. Given a fixed reference frame $\lambda_1 : R_1 \to \mathbf{m}$ of $G_1$ suppose that $p$ is a minimum length product corresponding to a path in $\Delta^\ast$ such that $\overline{\lambda_1p} \in G_2$. There exists a reference frame $\lambda_2 : R_2 \to \mathbf{n}$ of $G_2$, a product $x$ consisting solely of deletions and a product $y$ consisting solely of inversions (both corresponding to paths in $\Delta^\ast$) such that $\ell(xy) = \ell(p)$ and $\overline{\lambda_1xy} = \lambda_2$.
\end{lem}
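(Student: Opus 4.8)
I would prove the lemma by induction on $m-n$, which is the number of deletion edges traversed by $p$: indeed $\overline{\lambda_1p}$ is a reference frame of $G_2$, so $\lambda_1p$ is a partial permutation $R_1\to\mathbf n$, hence $p$ is a path from $m$ to $n$ in $\Delta$, and among the edge types only deletions change the vertex, each by one. Before starting the induction I would first reduce to the case where $p$ contains no rotation or reflection edge. To do this, let $g$ be the right-most such edge, write $p=p_1\,g\,p_2$ with $p_2$ a word over inversions and deletions, and push $g$ rightwards through $p_2$ one symbol at a time using the relations (R8)--(R14). Every such step is length non-increasing, and it is strictly length-decreasing only when (R11) is used; but an application of (R11) would replace $p$ by a strictly shorter word with the same underlying partial permutation, hence the same restricted image, contradicting minimality of $p$. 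So (R11) is never invoked, $g$ traverses all of $p_2$, and -- having passed every deletion edge to its right -- it emerges at the end as a rotation or reflection at level $n$, that is, as an element of $D_n$. Deleting that last symbol leaves a path one shorter whose underlying partial permutation differs from $\bar p$ only by an element of $D_n$; since $G_2$ is closed under the $D_n$-action, that path still carries $\lambda_1$ into $G_2$, again contradicting minimality. Hence $p$ may be taken to be a word over inversions and deletions with exactly $m-n$ deletions.

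\textbf{Base case and sliding the first deletion.} If $m=n$ the word $p$ consists only of inversions and I would take $y=p$ and $x$ the empty product. If $m>n$, write $p=v_0\,d_{j;m}\,v_1$ where $d_{j;m}$ is the first deletion of $p$ and $v_0=s_{a_1;m}\cdots s_{a_r;m}$ is a word of level-$m$ inversions. The key manoeuvre is to slide $d_{j;m}$ leftwards past $v_0$, repeatedly rewriting the adjacent pair (inversion)$\,d_{j;m}$ by whichever of (R1)--(R7) applies (the hypotheses of (R1)--(R7) partition all such pairs, so a rewrite is always available). I would argue that only (R1), (R4) and (R5) can ever be used: an application of (R6) or (R7) would erase a symbol, producing a strictly shorter path with image still in $G_2$, contradicting minimality; and applications of (R2) and (R3) are excluded by the argument in the final paragraph. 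Since (R1), (R4), (R5) are length-preserving and leave the symbol $d_{j;m}$ unchanged, this process terminates with $p$ rewritten as $d_{j;m}\,v_0'\,v_1$, where $v_0'$ is a word of $r$ level-$(m-1)$ inversions, of the same length as $p$, with $\overline{\lambda_1\,d_{j;m}\,v_0'\,v_1}=\overline{\lambda_1p}$.

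\textbf{Closing the induction.} The region $x_j$ with $(x_j)\lambda_1=j$ is deleted by $p$, hence lies outside $R_2$, so $R_2\subseteq R_1\setminus\{x_j\}$. Let $G_1'=\bigl[\,\overline{\lambda_1\,d_{j;m}}\,\bigr]$, a genome with region set $R_1\setminus\{x_j\}$ of size $m-1$. The word $p':=v_0'\,v_1$ is a path from $\overline{\lambda_1\,d_{j;m}}$, of length $\ell(p)-1$, with restricted image in $G_2$, and it is minimal as such: a shorter one prepended by $d_{j;m}$ would beat $p$. Applying the inductive hypothesis to $G_1'$, $G_2$, the reference frame $\overline{\lambda_1\,d_{j;m}}$ and the path $p'$ yields a reference frame $\lambda_2\in G_2$, a deletion product $x'$ and an inversion product $y$ with $\ell(x'y)=\ell(p')$ and $\overline{(\overline{\lambda_1\,d_{j;m}})\,x'y}=\lambda_2$. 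Then $x:=d_{j;m}\,x'$ is a deletion product, $y$ an inversion product, $\ell(xy)=1+\ell(x'y)=\ell(p)$, and $\overline{\lambda_1\,xy}=\lambda_2$, as required.

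\textbf{The main obstacle: excluding (R2) and (R3).} The crux is showing that while sliding the first deletion we never meet a pair $s_{m;m}\,d_{i;m}$ with $i\in\{1,m\}$ (the situations governed by (R2) and (R3)). The awkwardness is that this commutation escapes the inversion/deletion alphabet: it spills out a rotation whose expression as a word over the single available generator $c_{m-1}$ (there is no generator for $c_{m-1}^{-1}$) can be long, so one cannot naively keep the word's length fixed through that step. The way around it: the word at that stage is over inversions and deletions only, and $(R2)$ or $(R3)$ rewrites $s_{m;m}\,d_{i;m}$ as $d_{i';m}\,\rho$ with $\rho$ a rotation at level $m-1$; now push $\rho$ to the very end of the word using (R8)--(R11). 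Transporting a rotation past an inversion or a deletion leaves that symbol in place -- only re-indexed -- via (R8)--(R10), while (R11) merely shortens $\rho$ itself, so this step changes neither the number nor the kind of inversion and deletion symbols, altering only the length of the rotation. We arrive at a word $w\,\rho'$ with $\rho'$ a rotation at level $n$ and $\ell(w)=\ell(p)-1$ (we removed the two symbols $s_{m;m},d_{i;m}$ and inserted the one symbol $d_{i';m}$). Discarding $\rho'$ leaves the path $w$ of length $\ell(p)-1$, whose underlying partial permutation differs from $\bar p$ only by $\rho'\in D_n$, so by $D_n$-invariance of $G_2$ it still carries $\lambda_1$ into $G_2$ -- contradicting minimality. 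Hence (R2) and (R3) never occur, which is exactly what the second paragraph needed.
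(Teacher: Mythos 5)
Your proof is correct and follows essentially the same route as the paper's: both rewrite the minimal word via the relations (R1)--(R14) into the form (deletions)(inversions)(dihedral tail), then discard the tail using the closure of $G_2$ under the dihedral action, with minimality of $p$ forcing equality of lengths. The only substantive difference is that you explicitly confront the cases (R2), (R3), (R6), (R7) and (R11) by deriving a contradiction with minimality --- a worthwhile refinement, since the paper's blanket assertion that ``each application of these relations does not increase word length'' is not literally true of (R3), whose right-hand side $d_{n;n}c_{n-1}^{n-1}$ has $n$ letters; your argument patches exactly the point the paper elides.
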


\begin{proof}
Let $p$ be a minimum length product in $\mathcal{I}$ corresponding to a word in $\Delta^\ast$  (which, by abuse of notation, we also denote by $p$) consisting of inversions and deletions such that $\overline{\lambda_1p} \in G_2$. Suppose also that $p$ contains at least one deletion. Using the relations in \ref{(R1)} -- \ref{(R14)} it is clear that $p$ is related to a word of the form $xyr$ where $x$ consists solely of deletions, $y$ consists solely of inversions and $r$ consists solely of dihedral symmetries. Since each application of these relations does not increase word length, it follows that $\ell(xy) \leq \ell(xyr) \leq \ell(p)$. 

Now, if $\overline{\lambda_1p}$ is in $G_2$ then so too is $\overline{\lambda_1xyr}$ since $p$ and $xyr$ evaluate to the same partial permutation in $\mathcal{I}$. As $r$ consists only of rotations and reflections, it then follows that $\overline{\lambda_1xyrr^{-1}} = \overline{\lambda_1xy}$ is also in $G_2$ as the partial permutation corresponding to $r^{-1}$ is a dihedral group element. The minimality of $\ell(p)$ together with the fact that $\ell(xy) \leq \ell(p)$ implies that $\ell(xy) = \ell(p)$ which completes the proof. 
\end{proof}

\begin{thm}\label{thm:del.first}
For a fixed reference frame $\lambda_{G_1}$ of $G_1$ there exists a product $u$ of elements in $\mathcal{X}$ minimising $d(\lambda_{G_1}, G_2)$ where the deletions occur first.
\end{thm}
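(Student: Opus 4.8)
The plan is to read Theorem~\ref{thm:del.first} off Lemma~\ref{lem:del.first2} almost directly, once two bookkeeping points are handled: the degenerate case in which $G_2$ requires no deletions, and the mild mismatch that $d(\lambda_{G_1},G_2)$ is defined through products over $\mathcal{X}$ (which contains no dihedral generators $c_i,\alpha_i$) while Lemma~\ref{lem:del.first2} is stated for minimal-length paths in $\Delta^\ast$ (which do contain them).

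First I would dispose of the case $R_2 = R_1$. Since $\mathcal{X}$ contains only inversions and deletions and no insertions, applying any $d_{i;n}\in\mathcal{D}_n$ strictly decreases the number of positions, and no subsequent factor from $\mathcal{X}$ can restore it. So when $|R_2|=|R_1|$, every well-defined product $u$ of elements of $\mathcal{X}$ with $\overline{\lambda_{G_1}u}\in G_2$ is deletion-free; in particular a length-minimising such $u$ consists solely of inversions, and ``the deletions occur first'' holds vacuously. Hence I would assume from now on that $R_2\subsetneq R_1$ and write $m=|R_1|$, $n=|R_2|$.

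Next I would construct the required $u$. Since $G_2$ is obtainable from $G_1$ by inversions and deletions, the set of lengths $\ell(q)$, over paths $q$ in $\Delta^\ast$ (starting at the vertex $m$) with $\overline{\lambda_{G_1}q}\in G_2$, is a nonempty subset of $\N$; let $p$ attain its minimum. Lemma~\ref{lem:del.first2}, applied with $\lambda_1=\lambda_{G_1}$ and this $p$, produces a reference frame $\lambda_2:R_2\to\mathbf{n}$ of $G_2$, a product $x$ of deletions and a product $y$ of inversions, both paths in $\Delta^\ast$, with $\ell(xy)=\ell(p)$ and $\overline{\lambda_{G_1}xy}=\lambda_2$. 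Set $u=xy$. Each factor of $x$ lies in some $\mathcal{D}_{i+1}$ and each factor of $y$ in some $\mathcal{T}_i$, so $u$ is a well-defined product of elements of $\mathcal{X}$ with $\overline{\lambda_{G_1}u}=\lambda_2\in G_2$, whence $d(\lambda_{G_1},G_2)\le\ell(u)=\ell(p)$. Conversely, every product of elements of $\mathcal{X}$ is itself a path in $\Delta^\ast$, so minimality of $p$ gives $\ell(p)\le d(\lambda_{G_1},G_2)$. Therefore $\ell(u)=d(\lambda_{G_1},G_2)$, so $u$ realises the minimum, and by construction all its deletions (the factors of $x$) come before all its inversions (the factors of $y$), as required.

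I do not anticipate a real obstacle; the only point needing care is the comparison in the previous paragraph, i.e.\ verifying that enlarging the repertoire from $\mathcal{X}$-products to $\Delta^\ast$-paths (by allowing rotations and reflections) cannot shorten an optimal transformation --- and this is precisely what Lemma~\ref{lem:del.first2} supplies, since it converts an optimal $\Delta^\ast$-path into an equally short deletion-then-inversion product over $\mathcal{X}$. The rest is just unwinding the definitions of $\overline{\lambda_{G_1}(\cdot)}$, $\ell(\cdot)$ and $d(\lambda_{G_1},G_2)$.
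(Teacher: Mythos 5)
Your proof is correct and follows the same route as the paper, whose entire argument for this theorem is the single line ``This follows immediately from Lemma~\ref{lem:del.first2}.'' Your version simply fills in the bookkeeping the paper leaves implicit --- the deletion-free case and the comparison between $\mathcal{X}$-products and $\Delta^\ast$-paths --- which is a worthwhile elaboration but not a different approach.
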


\begin{proof}
This follows immediately from Lemma \ref{lem:del.first2}.
\end{proof}

\subsection{Reconstructing the Most Recent Common Ancestor}

Given genomes $G_1$ and $G_2$, candidates for their most recent common ancestor (under the parsimony criterion) are genomes $A$ with region set $R_1 \cup R_2$ minimising the sum $d(A, G_1) + d(A, G_2)$. While it could be the case that there are distinct reference frames $\lambda_{A_1}$ and $\lambda_{A_2}$ of $A$ such that $d(A, G_1) = d(\lambda_{A_1}, G_1)$ and $d(A, G_2) = d(\lambda_{A_2}, G_2)$ where $d(A, G_1) + d(A, G_2)$ is minimal, the following theorem establishes the fact that minimum length inversion/deletion sequences yielding $G_1$ and $G_2$ can always be thought of as beginning with a fixed reference frame of $A$. 

\begin{thm}\label{thm:common.ref}
Let $G_1$ and $G_2$ be genomes with region sets $R_1$ and $R_2$ respectively and suppose, among genomes with region set $R_1 \cup R_2$, that the genome $A$ has the property that $d(A, G_1) + d(A, G_2)$ is minimal. There exists a reference frame $\lambda_A$ of $A$ such that $d(A, G_1) = d(\lambda_A, G_1)$ and $d(A, G_2) = d(\lambda_A, G_2)$.  
\end{thm}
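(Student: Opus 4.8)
The plan is to prove the slightly stronger fact that $d(\lambda_A, G_i)$ is the same for \emph{every} reference frame $\lambda_A$ of $A$ (for $i=1,2$). Since $d(A,G_i)$ is by definition the minimum of $d(\lambda_A,G_i)$ over all reference frames $\lambda_A$ of $A$, this common value must equal $d(A,G_i)$, and then any single reference frame of $A$ witnesses the theorem. So it suffices to establish the following invariance: if $H$ is a genome with $|R_H|=m$, $\lambda$ is a reference frame of $H$, $\delta\in D_m$, and $G$ is any genome, then $d(\lambda\delta, G)=d(\lambda, G)$. (That two reference frames of the same genome differ by right multiplication by an element of $D_m$ is immediate from the definition of a genome as a $D_m$-orbit of words, together with the maps-on-the-right convention.) Replacing $(\lambda,\delta)$ by $(\lambda\delta,\delta^{-1})$, it is enough to prove $d(\lambda, G)\le d(\lambda\delta, G)$.

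To do so, choose a product $u$ over $\mathcal X$ with $\ell(u)=d(\lambda\delta, G)$ and $\overline{\lambda\delta u}\in G$, and write $\delta$ as a word in the rotation and reflection generators $c_m,\alpha_m$ of $D_m$. The idea is to commute this dihedral prefix to the right through all of $u$ using relations \ref{(R8)}--\ref{(R14)}. Each of those relations moves one copy of a $c_k$ or $\alpha_k$ one step to the right, converting an inversion into an inversion, or a deletion into a deletion, or (in case \ref{(R11)}) absorbing a rotation into a deletion; in every case the generator that emerges on the right is again a single $c_{k'}$ or $\alpha_{k'}$, of possibly smaller rank, or nothing, and the ranks along the path stay consistent. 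Pushing every generator of $\delta$ all the way through $u$ in this way yields an identity $\delta u = u'\delta'$ of partial permutations in $\mathcal I$, where $u'$ is a product over $\mathcal X$ containing exactly as many inversions and as many deletions as $u$, so that $\ell(u')=\ell(u)$, and $\delta'$ is a product of dihedral generators at the terminal rank $n:=|R_G|$, hence $\delta'\in D_n$.

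It remains to see that $\overline{\lambda u'}\in G$. Since $u'$ has the same number of deletions as $u$, namely $m-n$, the partial permutation $\overline{\lambda u'}$ is a bijection from $R_G$ onto $\mathbf n$, i.e.\ a genuine reference frame. From $\delta u=u'\delta'$ we get $\lambda u'=\lambda\delta u\,(\delta')^{-1}$, so $\overline{\lambda u'}=\overline{\lambda\delta u}\,(\delta')^{-1}$; as $\overline{\lambda\delta u}\in G$ and $(\delta')^{-1}\in D_n$, and the reference frames of $G$ form a single $D_n$-orbit, we conclude $\overline{\lambda u'}\in G$. Therefore $d(\lambda,G)\le \ell(u')=\ell(u)=d(\lambda\delta,G)$, which establishes the invariance statement and hence the theorem.

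I expect the only real work to be in the second paragraph: one must check that the case analyses in \ref{(R8)}--\ref{(R14)} genuinely exhaust every way a rotation or reflection can sit immediately to the left of an inversion $s_{i;k}$ or a deletion $d_{i;k}$, that commuting past never alters the inversion count or the deletion count (so that $\ell$ is preserved), and that after each step the rank of the trailing dihedral generator matches the rank of the path at that point — so that, at the end, $\delta'$ really is an element of $D_n$ and the cancellation $\overline{\lambda u'}=\overline{\lambda\delta u}\,(\delta')^{-1}$ is legitimate. Everything else is bookkeeping.
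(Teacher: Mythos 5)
Your proposal is correct and follows essentially the same route as the paper's proof: both hinge on using relations \ref{(R8)}--\ref{(R14)} to push a dihedral word rightward through the inversion/deletion sequence without increasing its length, and then absorbing the residual dihedral element by passing to a different reference frame of the target genome. You merely package this as the invariance of $d(\lambda_A,G_i)$ under the full $D_m$-action rather than comparing the two specific frames $\lambda_{A_1},\lambda_{A_2}$ as the paper does, which is an immaterial difference.
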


\begin{proof}
Suppose that $\overline{\lambda_{A_1}y} = \lambda_{G_1}$ and $\overline{\lambda_{A_2}z} = \lambda_{G_2}$ where $\lambda_{A_1}$ and $\lambda_{A_2}$ are reference frames of $A$ and where $y$ and $z$ are minimum length sequences of inversions and deletions. Suppose also that $w \in \{\alpha_n, c_n\}^\ast$ (that is, the set of all words whose letters are in $\{\alpha_n, c_n\}$) is such that $\lambda_{A_1}w = \lambda_{A_2}$. Since both $\alpha_n$ and $c_n$ are dihedral group elements there exists $\alpha_n^{-1}$ and $c_n^{-1}$ such that $\alpha_n\alpha_n^{-1}$ and $c_nc_n^{-1}$ are the identity map at $n \in \mathbb{N}$. As such, there exists a word $u \in \{\alpha_n, c_n\}^\ast$ such that $wu$ corresponds to the identity map at $n$ and so
\begin{equation}\label{eq:refframe1}
\lambda_{G_1} = \overline{\lambda_{A_1}y} = \overline{\lambda_{A_1}wuy} = \overline{\lambda_{A_2}uy}.
\end{equation}

Using the relations \ref{(R8)} -- \ref{(R14)} there exists a word $v \in \{\alpha_a, c_a\}^\ast$ for some $a \in \N^+$ and a word $y'$ of inversions and deletions such that $uy \sim y'v$ where $\ell(y') \leq \ell(y)$, in which case 
\begin{equation}\label{eq:comanc}
\overline{\lambda_{A_2}y'v} = \lambda_{G_1}
\end{equation}
by Equation \eqref{eq:refframe1}. Consider the fact that $\overline{\lambda_{A_2}y'vv^{-1}} =  \overline{\lambda_{A_2}y'}$ by Equation \eqref{eq:comanc}. Since multiplying on the right by $v^{-1} \in \{\alpha_a, c_a\}^\ast$ is equivalent to changing the reference frame of $\overline{\lambda_{A_2}y'v} = \lambda_{G_1}$, there is thus a sequence of inversions and deletions of length $\ell(y')$ such that $\overline{\lambda_{A_2}y'} \in G_1$ which completes the proof since $\ell(y') \leq \ell(y)$ and $y$ is minimal.
\end{proof}

To find the minimal distance $d(A, G_1) + d(A, G_2)$ given $G_1$ and $G_2$ we define a problem which we will refer to as the \emph{region alignment problem}, and show that if the solution to the region alignment problem is $k \in \mathbb{N}$ over all reference pairs in $G_1 \times G_2$ then these genomes have arisen minimally in $d(A, G_1) + d(A, G_2) = k + |R_1 \ominus R_2|$ inversions and deletions (where $\ominus$ denotes the symmetric difference of sets).

To define this problem, begin with $G_1$ and $G_2$ where $|R_1| = m$ and $|R_2| = n$. For a reference pair $(g_1, g_2) \in G_1 \times G_2$ where $g_1 = x_1\cdots x_m$ and $g_2 = y_1\cdots y_n$ construct a partial permutation $\sigma_{g_1, g_2} \in \mathcal{I}_{m,n}$ where $(i)\sigma_{g_1, g_2} = j$ if and only if $x_i = y_j$. Figure \ref{fig:genomepperm} illustrates an example of how $\sigma_{g_1, g_2}$ is formed.

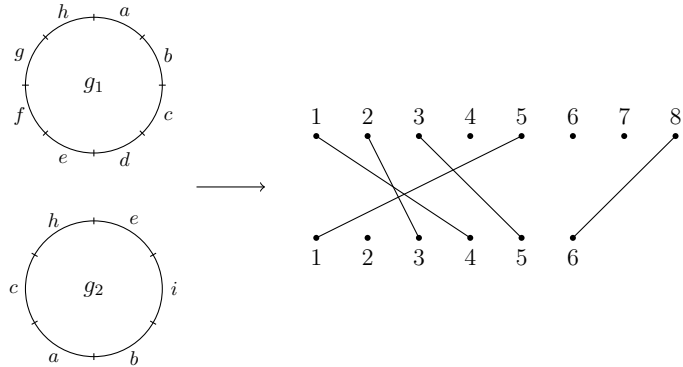
\begin{figure}[h]
\begin{center}
\begin{tikzpicture}[scale = 0.45, every node/.style={scale=0.75}]

      \draw (0,0) circle (2cm);
      \draw (0,0) node {$g_1$};
      \foreach \x in {0,45,90,...,315} \draw (\x:1.9cm) -- (\x:2.1cm);
      \def\rad{2.35cm}
      \draw (67.5:\rad) node[] {\footnotesize $a$}; 
      \draw (22.5:\rad) node[] {\footnotesize $b$};
      \draw (-22.5:\rad) node[] {\footnotesize $c$};  
      \draw (-67.5:\rad) node[] {\footnotesize $d$};
      \draw (-112.5:\rad) node[] {\footnotesize $e$};  
      \draw (-157.5:\rad) node[] {\footnotesize $f$};
      \draw (-202.5:\rad) node[] {\footnotesize $g$};  
      \draw (-247.5:\rad) node[] {\footnotesize $h$};  
      
\begin{scope}[yshift=-6cm]
      \draw (0,0) circle (2cm);
      \draw (0,0) node {$g_2$};
      \foreach \x in {30, 90, 150, 210, 270, 330} \draw (\x:1.9cm) -- (\x:2.1cm);
      \def\rad{2.35cm}
      \draw (0:\rad) node[] {\footnotesize $i$}; 
      \draw (60:\rad) node[] {\footnotesize $e$};
      \draw (120:\rad) node[] {\footnotesize $h$};  
      \draw (180:\rad) node[] {\footnotesize $c$};  
      \draw (240:\rad) node[] {\footnotesize $a$};
      \draw (300:\rad) node[] {\footnotesize $b$};  
      
\end{scope}

\draw[->](3, -3) -- (5, -3);

\begin{scope}[xshift = 5cm, yshift = -4.5cm, xscale =1.5, every node/.style={scale=0.75}]
\foreach \i in {1,...,6}
{
\node[circle,fill=black,inner sep=1pt,minimum size=3pt, label= below:\i] (a) at (\i,0) {};
}
\foreach \i in {1,...,8}
{
\node[circle,fill=black,inner sep=1pt,minimum size=3pt, label= above:\i] (a) at (\i,3) {};
}

\draw (5,3) -- (1,0);
\draw (1,3) -- (4,0);
\draw (2,3) -- (3,0);
\draw (3,3) -- (5,0);
\draw (8,3) -- (6,0);

\end{scope}
\end{tikzpicture}
\end{center}
\caption{Forming the partial permutation $\sigma_{g_1, g_2} \in \mathcal{I}_{8,6}$ given two reference frames $g_1 = abcdefgh$ and $g_2 = eibach$ of $G_1$ and $G_2$ with $R_1 = \{a,b,c,d,e,f,g,h\}$ and $R_2 = \{a,b,c,e,h,i\}$. The fact that $a$ appears at position $1$ in $g_1$ and at position $4$ in $g_2$ means $(1)\sigma_{g_1, g_2} = 4$.}
\label{fig:genomepperm}
\end{figure}

The elements in $\mathbf{m}\setminus \dom(\sigma_{g_1, g_2})$ and $\mathbf{n}\setminus \im(\sigma_{g_1, g_2})$ represent the regions in the symmetric difference $R_1\ominus R_2$ that do not appear in both genomes. The crossings of $\sigma_{g_1, g_2}$ represent the disorder of the labels in $R_1 \cap R_2$, in the sense that if $(i,j)$ is a crossing in $\sigma_{g_1, g_2}$ then the label $x_j \in R_1 \cap R_2$ appears before $x_i \in R_1 \cap R_2$ in the word $g_2$ while $x_j$ appears after $x_i$ in $g_1$. If $\sigma_{g_1, g_2}$ is order preserving then regions in $R_1 \cap R_2$ appear in the same order reading from position 1 to position $n$ around the genome.

Once $\sigma_{g_1, g_2}$ has been constructed multiplying on the left of $\sigma_{g_1, g_2}$ by an element of $\mathcal{T}_m$ represents an inversion acting on the reference frame $g_1$ while multiplying on the right by an element of $\mathcal{T}_n$ represents an inversion acting on $g_2$. In the region alignment problem we are given a reference pair $(g_1, g_2) \in G_1 \times G_2$ and ask for the minimum number of inversions acting on either $g_1$ or $g_2$ (or both) to place the regions in $R_1 \cap R_2$ in the same (clockwise) cyclic order in both genomes. The region alignment problem is stated mathematically as follows.

\begin{problem}\label{prob:main}
Let $G_1$ and $G_2$ be genomes with region sets $R_1$ and $R_2$ respectively where $|R_1| = m$ and $|R_2| = n$. For a reference pair $(g_1, g_2) \in G_1 \times G_2$, find a sequence $t_m$ of elements in $\mathcal{T}_m$ and a sequence $t_n$ of elements in $\mathcal{T}_n$ minimising $\ell(t_m) + \ell(t_n)$ such that $t_m\sigma_{g_1, g_2}t_n \in \mathcal{POPI}_{m,n}$. 
\end{problem}

This problem is a generalisation of the problem considered by \citet{egrinagy2014group} regarding the minimum inversion distance between two genomes with the same region set, which for a permutation $\sigma \in S_n$, asks for the minimum length sequence $t_n$ of elements in $\mathcal{T}_n$ such that $\sigma t_n$ is the identity. 

\begin{thm}\label{th:orientation}
If $G_1$ and $G_2$ are genomes with region sets $R_1$ and $R_2$ respectively and $\mu(g_1, g_2)$ is the minimum length solution to Problem \ref{prob:main} for a fixed reference pair $(g_1, g_2)$ then, under the parsimony criterion, $G_1$ and $G_2$ have descended from their most recent common ancestor in
\[
\ell(g_1, g_2) = |R_1 \ominus R_2| + \min\{\mu(g_1, g_2) : (g_1, g_2) \in G_1 \times G_2\}
\]
inversions and deletions.
\end{thm}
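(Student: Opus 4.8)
The plan is to prove the equivalent assertion that, if $A^{*}$ is a genome with region set $R_1\cup R_2$ minimising $d(A,G_1)+d(A,G_2)$ (so that $A^{*}$ is a most recent common ancestor), then $d(A^{*},G_1)+d(A^{*},G_2)=\ell(g_1,g_2)$; this amounts to the identity
\[
\min\bigl\{d(A,G_1)+d(A,G_2)\bigr\}=|R_1\ominus R_2|+\min\{\mu(g_1,g_2):(g_1,g_2)\in G_1\times G_2\},
\]
the minimum on the left ranging over genomes $A$ with region set $R_1\cup R_2$. First I would decompose each $d(A,G_i)$ into a deletion count plus an inversion distance. Fix such an $A$ and a reference frame $\lambda$ of it. Reaching a reference frame of $G_1$ forces every region of $(R_1\cup R_2)\setminus R_1=R_2\setminus R_1$ to be deleted and no region of $R_1$ to be deleted (there are no insertions); by Theorem~\ref{thm:del.first} an optimal inversion/deletion sequence out of $\lambda$ may be taken with its deletions first, whence exactly $|R_2\setminus R_1|$ deletions occur and one is then left with the reference frame of the genome $A_{R_1}$ (obtained from $A$ by deleting those regions) induced by $\lambda$. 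Thus $d(\lambda,G_1)=|R_2\setminus R_1|+k$ with $k$ the least number of inversions carrying that reference frame to a reference frame of $G_1$, and minimising over $\lambda$ (whose induced reference frames exhaust those of $A_{R_1}$) gives $d(A,G_1)=|R_2\setminus R_1|+\delta(A_{R_1},G_1)$, where $\delta(H,G)$ denotes the least number of inversions transforming one of two genomes $H,G$ with a common region set into the other ($\delta$ being symmetric since each $s_{i;n}$ is an involution). The analogous identity holds for $G_2$; since $|R_2\setminus R_1|+|R_1\setminus R_2|=|R_1\ominus R_2|$, the theorem reduces to
\[
\min\{\delta(A_{R_1},G_1)+\delta(A_{R_2},G_2):A\}=\min\{\mu(g_1,g_2):(g_1,g_2)\in G_1\times G_2\}.
\]

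For the inequality ``$\le$'' I would take an optimal reference pair $(g_1,g_2)$ for $\mu$, with inversion words $t_m,t_n$ realising $t_m\sigma_{g_1,g_2}t_n\in\mathcal{POPI}_{m,n}$ and $\ell(t_m)+\ell(t_n)=\mu(g_1,g_2)$. Using the correspondence that left multiplication of $\sigma_{g_1,g_2}$ by an element of $\mathcal{T}_m$ (respectively right multiplication by an element of $\mathcal{T}_n$) models an inversion acting on $g_1$ (respectively $g_2$), together with the fact that an inversion word has an inverse word of the same length, I obtain reference frames $g_1'=\overline{\lambda_{g_1}t_m^{-1}}$ and $g_2'=\overline{\lambda_{g_2}t_n}$ with $\sigma_{g_1',g_2'}=t_m\sigma_{g_1,g_2}t_n\in\mathcal{POPI}_{m,n}$, from which $g_1$ is reached in $\ell(t_m)$ inversions and $g_2$ in $\ell(t_n)$ inversions; hence $\delta([g_1'],G_1)\le\ell(t_m)$ and $\delta([g_2'],G_2)\le\ell(t_n)$. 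Since $\sigma_{g_1',g_2'}$ is orientation preserving, the regions of $R_1\cap R_2$ occur in the same clockwise cyclic order in $[g_1']$ and in $[g_2']$, so one can build a genome $A$ with region set $R_1\cup R_2$ by placing those common regions on a circle in that order and, in each arc between two cyclically consecutive common regions, inserting first the block of $R_1\setminus R_2$ regions lying in that arc in $[g_1']$ and then the block of $R_2\setminus R_1$ regions lying there in $[g_2']$ (when $|R_1\cap R_2|\le 1$, and in particular when $R_1\cap R_2=\varnothing$, the same recipe applies with a single arc). Deleting from $A$ the regions of $R_2\setminus R_1$ returns $[g_1']$ and deleting those of $R_1\setminus R_2$ returns $[g_2']$, so $A_{R_1}=[g_1']$ and $A_{R_2}=[g_2']$, whence $\delta(A_{R_1},G_1)+\delta(A_{R_2},G_2)\le\ell(t_m)+\ell(t_n)=\mu(g_1,g_2)$; minimising over $(g_1,g_2)$ gives ``$\le$''.

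For ``$\ge$'' it suffices to treat a most recent common ancestor $A^{*}$. I would invoke Theorem~\ref{thm:common.ref} to fix one reference frame $\lambda$ of $A^{*}$ with $d(\lambda,G_1)=d(A^{*},G_1)$ and $d(\lambda,G_2)=d(A^{*},G_2)$, and let $h_1,h_2$ be the reference frames of $A^{*}_{R_1},A^{*}_{R_2}$ induced by $\lambda$. The decomposition of the first paragraph applied to this particular $\lambda$ gives $d(A^{*},G_1)=|R_2\setminus R_1|+k_1$ and $d(A^{*},G_2)=|R_1\setminus R_2|+k_2$, where $k_i$ is the least number of inversions carrying $h_i$ to a reference frame of $G_i$; pick inversion words $y_1,y_2$ of lengths $k_1,k_2$ achieving this and set $g_i:=\overline{h_iy_i}\in G_i$. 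Because $h_1$ and $h_2$ are restrictions of the single linear word $\lambda$, the common regions appear in the same left-to-right order in both, so $\sigma_{h_1,h_2}$ is in fact order preserving, and unwinding the correspondence yields $y_1\,\sigma_{g_1,g_2}\,y_2^{-1}=\sigma_{h_1,h_2}\in\mathcal{POI}_{m,n}\subseteq\mathcal{POPI}_{m,n}$. Taking $t_m:=y_1$ and $t_n:=y_2^{-1}$, of lengths $k_1$ and $k_2$, shows $\mu(g_1,g_2)\le k_1+k_2=\delta(A^{*}_{R_1},G_1)+\delta(A^{*}_{R_2},G_2)$, which is ``$\ge$''. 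Combining the two inequalities gives the displayed identity, hence $d(A^{*},G_1)+d(A^{*},G_2)=|R_1\ominus R_2|+\min\{\mu(g_1,g_2):(g_1,g_2)\in G_1\times G_2\}=\ell(g_1,g_2)$, as required.

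The step I expect to be most delicate is the construction of the ancestor $A$ in the ``$\le$'' direction and the verification that its restrictions to $R_1$ and $R_2$ are precisely the genomes $[g_1']$ and $[g_2']$: this calls for careful bookkeeping with cyclic words, with the merging of the two families of ``private'' regions inside each arc, and with the degenerate cases in which $R_1\cap R_2$ has at most one element. A secondary point worth isolating as a small lemma is the precise translation between inversions acting on reference frames and left or right multiplication of $\sigma_{g_1,g_2}$ by elements of $\mathcal{T}_m$ and $\mathcal{T}_n$, including the handling of inverse words (harmless for word lengths, since the generators are involutions) and the fact that a genome, being an equivalence class under the action of $D_n$, determines the cyclic order of its regions only up to reflection.
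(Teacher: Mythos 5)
Your proposal is correct and follows essentially the same route as the paper's proof: both directions rest on Theorem~\ref{thm:del.first} (deletions first), Theorem~\ref{thm:common.ref} (a single reference frame of the ancestor), the observation that frames obtained by deletion from a common reference frame give an orientation-preserving $\sigma$, and the same construction of $A$ by inserting the private regions of one intermediate genome into the arcs between consecutive common regions of the other. Your explicit decomposition $d(A,G_i)=|R_{3-i}\setminus R_i|+\delta(A_{R_i},G_i)$ and the two-inequality organisation are a slightly cleaner packaging of what the paper does via a lower-bound-by-contradiction plus an explicit construction, but the mathematical content is the same.
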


\begin{proof}
Let $k = \min\{\mu(g_1, g_2) : (g_1, g_2) \in G_1 \times G_2\}$. We begin by showing that $d(A, G_1)+ d(A, G_2)$ is bounded below by $k + |R_1 \ominus R_2|$ over all genomes $A$ with region set $R_1 \cup R_2$. To do this, suppose with the aim of obtaining a contradiction that there exists a reference pair $(g_1, g_2) \in G_1 \times G_2$ and products $t_m$ and $t_n$ of elements in $\mathcal{T}_m$ and $\mathcal{T}_n$ respectively with $t_m\sigma_{g_1, g_2}t_n \in \mathcal{POPI}_{m,n}$ (i.e. $\ell(t_m) + \ell(t_n) = k$), but where $G_1$ and $G_2$ have descended from their most recent common ancestor in strictly less than $k + |R_1 \ominus R_2|$ inversions and deletions.

By Theorem \ref{thm:del.first} there exists a genome $A$ with region set $R_1 \cup R_2$ minimising $d(A, G_1) + d(A, G_2)$ where $|R_1 \ominus R_2|$ deletions occur first. Further, by Theorem \ref{thm:common.ref} there exists a fixed reference frame $\lambda_{A}$ of $A$ where $d(A, G_1) = d(\lambda_{A}, G_1)$ and $d(A, G_2) = d(\lambda_{A}, G_2)$ in a minimal sum $d(A, G_1) + d(A, G_2)$. With these facts in mind and using Figure \ref{fig:example} as a guide, there exists a parsimonious inversion/deletion sequence yielding $G_1$ that proceeds by first deleting the regions in $R_2 \setminus R_1$ from a reference frame of $A$. This gives rise to a reference frame of an intermediate genome $G_1^\prime$. Likewise for $G_2$, the regions in $R_1 \setminus R_2$ are deleted first from $A$ to yield a reference frame of an intermediate genome $G_2^\prime$.  Since $G_1^\prime$ and $G_2^\prime$ have been obtained via deletions from the same reference frame of $A$, for all reference pairs $(g_1^\prime, g_2^\prime) \in G_1^\prime \times G_2^\prime$ the regions in $R_1 \cap R_2$ appear in the same clockwise cyclic order in both genomes. Thus, the partial permutation $\sigma_{g_1', g_2'}$ is orientation preserving (that is, $\sigma_{g_1', g_2'} \in \mathcal{POPI}_{m,n}$). Equivalently, there exists $(g_1^\prime, g_2^\prime) \in G_1^\prime \times G_2^\prime$ (possibly after rotating one of the genomes) such that $\sigma_{g_1^\prime, g_2^\prime}$ is order preserving (that is, $\sigma_{g_1', g_2'} \in \mathcal{POI}_{m,n}$). 

If $G_1$ and $G_2$ subsequently arise by sequences of inversions $p$ and $q$ in $\mathcal{T}_m$ and $\mathcal{T}_n$ acting on $g_1^\prime$ and $g_2^\prime$ respectively, then there exists $(g_1, g_2) \in G_1 \times G_2$ such that $p\sigma_{g_1^\prime, g_2^\prime}q = \sigma_{g_1, g_2}$. However, it would then follow that $p^{-1}\sigma_{g_1, g_2}q^{-1}$ is orientation preserving where $\ell(p^{-1}) + \ell(q^{-1}) = \ell(p) + \ell(q)$. Since we have $\sigma_{g_1^\prime, g_2^\prime} = p^{-1}\sigma_{g_1, g_2}q^{-1}$, the assumption that $k = \min\{\mu(g_1, g_2) : (g_1, g_2) \in G_1 \times G_2\}$ is contradicted if $\ell(p) + \ell(q) < \ell(t_m) + \ell(t_n)$. As such, $d(A, G_1)+ d(A, G_2)$ is bounded below by $k + |R_1 \ominus R_2|$ over all genomes $A$ with region set $R_1 \cup R_2$. 

To complete the proof, we show that if $k = \min\{\mu(g_1, g_2) : (g_1, g_2) \in G_1 \times G_2\}$ then there exists a genome $A$ with region set $R_1 \cup R_2$ such that $d(A, G_1)+ d(A, G_2) = k + |R_1 \ominus R_2|$. Beginning with a reference pair $(g_1, g_2) \in G_1 \times G_2$ with $\mu(g_1, g_2) = k$, suppose there exists sequences $t_m$ and $t_n$ of inversions from $\mathcal{T}_m$ and $\mathcal{T}_n$ respectively such that $t_m\sigma_{g_1, g_2}t_n \in \mathcal{POPI}_{m,n}$ (where $\ell(t_m) + \ell(t_n) = k$). Further, suppose that reference frames $g_1^\prime$ and $g_2^\prime$ of $G_1^\prime$ and $G_2^\prime$ are the result of these sequences of inversions acting on $g_1$ and $g_2$ respectively. By the circularity of the genomes (performing a rotation if necessary), it may be assumed without loss of generality that the regions in $R_1 \cap R_2 = \{r_1, \dots, r_h\}$ appear in the same order reading from 1 to $n$ in both $g_1^\prime$ and $g_2^\prime$.

Let $U_i$ be the set of regions appearing between $r_i$ and $r_{i+1}$ in $g_2^\prime$ reading from $1$ to $n$ for all $1 \leq i \leq h-1$, let $U_{h}$ be the set of regions appearing after $r_{h}$ (up to and including position $n$) in $g_2^\prime$ and let $U_0$ be the set of regions appearing before $r_1$ (from position 1 onward) in $g_2^\prime$. Beginning with $g_1^\prime$, form a genome $A$ with region set $R_1 \cup R_2$ by first inserting the regions in $U_0$ before $r_1$ in $g_1^\prime$ where the minimum element of $U_i$ is at position $1$ in $A$. If $U_0$ is empty, then $r_1$ is in position 1 in $A$. Next, for all $1 \leq i \leq h-1$ insert regions from $U_i$ into $g_1^\prime$ between $r_i$ and $r_{i+1}$ in any way that ensures the regions in $U_i$ appear in the same order that they do in $g_2^\prime$ reading from position $1$ to $n$. Finally, insert regions $U_{h}$ after $r_h$ in any way that ensures their appropriate order reading from $1$ to $n$ where the maximal element of $U_h$ is position $n$ in the resulting genome $A$. If $U_h$ is empty, then $r_h$ appears in position $n$ in $A$. Figure \ref{fig:insert} illustrates an example of these insertions. 

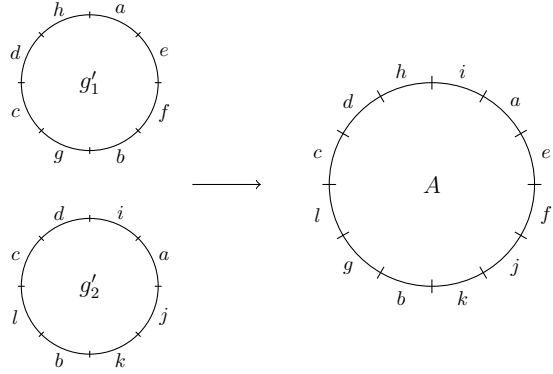
\begin{figure}[ht]
\begin{center}
\begin{tikzpicture}[scale = 0.45, every node/.style={scale=0.75}]

      \draw (0,0) circle (2cm);
      \draw (0,0) node {$g_1^\prime$};
      \foreach \x in {0,45,90,...,315} \draw (\x:1.9cm) -- (\x:2.1cm);
      \def\rad{2.35cm}
      \draw (67.5:\rad) node[] {\footnotesize $a$}; 
      \draw (22.5:\rad) node[] {\footnotesize $e$};
      \draw (-22.5:\rad) node[] {\footnotesize $f$};  
      \draw (-67.5:\rad) node[] {\footnotesize $b$};
      \draw (-112.5:\rad) node[] {\footnotesize $g$};  
      \draw (-157.5:\rad) node[] {\footnotesize $c$};
      \draw (-202.5:\rad) node[] {\footnotesize $d$};  
      \draw (-247.5:\rad) node[] {\footnotesize $h$};  
      
\begin{scope}[yshift=-6cm]
   \draw (0,0) circle (2cm);
      \draw (0,0) node {$g_2^\prime$};
      \foreach \x in {0,45,90,...,315} \draw (\x:1.9cm) -- (\x:2.1cm);
      \def\rad{2.35cm}
      \draw (67.5:\rad) node[] {\footnotesize $i$}; 
      \draw (22.5:\rad) node[] {\footnotesize $a$};
      \draw (-22.5:\rad) node[] {\footnotesize $j$};  
      \draw (-67.5:\rad) node[] {\footnotesize $k$};
      \draw (-112.5:\rad) node[] {\footnotesize $b$};  
      \draw (-157.5:\rad) node[] {\footnotesize $l$};
      \draw (-202.5:\rad) node[] {\footnotesize $c$};  
      \draw (-247.5:\rad) node[] {\footnotesize $d$};  
\end{scope}

\draw[->](3, -3) -- (5, -3);

\begin{scope}[scale = 2, xshift = 5cm, yshift = -1.5cm]
\draw (0,0) circle (1.5cm);
      \draw (0,0) node {$A$};
      \foreach \x in {0,30,60,...,330} \draw (\x:1.4cm) -- (\x:1.6cm);
      \def\rad{1.73cm}
      \draw (75:\rad) node[] {\footnotesize $i$};
      \draw (45:\rad) node[] {\footnotesize $a$};
      \draw (15:\rad) node[] {\footnotesize $e$};
      \draw (-15:\rad) node[] {\footnotesize $f$};
      \draw (-45:\rad) node[] {\footnotesize $j$};
      \draw (-75:\rad) node[] {\footnotesize $k$};
      \draw (-105:\rad) node[] {\footnotesize $b$};
      \draw (-135:\rad) node[] {\footnotesize $g$};
      \draw (-165:\rad) node[] {\footnotesize $l$};
      \draw (-195:\rad) node[] {\footnotesize $c$};
      \draw (-225:\rad) node[] {\footnotesize $d$};
      \draw (-255:\rad) node[] {\footnotesize $h$};
\end{scope}
\end{tikzpicture}
\end{center}
\caption{Given $g_1^\prime$ and $g_2^\prime$, we form the genome $A$ by inserting regions from $R_2 \setminus R_1$ into $g_1^\prime$ in their appropriate positions (with respect to elements of $R_1 \cap R_2$) and appropriate order from 1 to $n$.}
\label{fig:insert}
\end{figure}

Given the construction of $A$, it is easily verified that deleting the regions in $R_2 \setminus R_1$ from $A$ yields $g_1^\prime$ and that deleting the regions in $R_1 \setminus R_2$ from $A$ yields $g_2^\prime$. The inverses of the sequences $t_m$ and $t_n$ of inversions then act on $g_1^\prime$ and $g_2^\prime$ respectively to yield $g_1$ and $g_2$ in a total of $k + |R_1 \ominus R_2|$ inversions and deletions, as required.
\end{proof}

\section{Exact Algorithm and Complexity}\label{sec:exact}

We continue to assume that $G_1$ and $G_2$ are genomes with region sets $R_1$ and $R_2$,
respectively with $|R_1| = m$ and $|R_2| = n$. In this section we provide an
exact algorithm for computing sequences $t_m$ in $\mathcal{T}_m$ and $t_n$ in
$\mathcal{T}_n$ from Problem \ref{prob:main} such that $\ell(t_m) + \ell(t_n)$ is minimised and
$t_m \sigma_{g_1, g_2} t_n \in \mathcal{POPI}_{m, n}$. As previously, given $\sigma_{g_1, g_2}$
this minimised value is denoted by $\mu(\sigma_{g_1, g_2})$.
Additionally, we describe the asymptotic time and space complexity of the algorithm, and the limits of
its practical applicability on currently available computer hardware.

We denote the identity partial permutation on the set $X$ by
$\operatorname{id}_{X}$. For the purposes of this section, a \textit{graph}
$\Gamma$ is a triple $(V, E, X)$ where $V$ is a set whose elements are called the
\textit{vertices} of $\Gamma$; $X$ is the set of \textit{edge labels} of
$\Gamma$; and $E \subseteq V \times X \times V$ is the set of \textit{edges} of
$\Gamma$.

If $S$ is a semigroup and $X$ is a subset of $S$, then
we define the \textit{left Cayley graph} of $S$ with respect to $X$ to be the
graph with nodes $S$ and edges $(s, x, xs) \in S \times X \times S$ for all
$s\in S$ and for all $x\in X$; we denote this by $\Gamma_L(S, X)$. The \textit{right
Cayley graph} is defined dually, and is denoted $\Gamma_R(S, X)$.
If $\Gamma_L(S_1, X_1)$ and $\Gamma_R(S_2, X_2)$ are the left and right Cayley graphs (respectively) of semigroups $S_1$ and $S_2$ with respect to subsets $X_1$ and $X_2$ then, given $S_1 \subseteq S_2$, we define the \textit{union} of these graphs to be the graph with nodes $S_2$ and all of the edges belonging to $\Gamma_L(S_1,X_1)$ and $\Gamma_R(S_2, X_2)$. If $\Gamma = (V, E, X)$ is any graph and $A$ is a
subset of the vertices $V$ of $\Gamma$, then the \textit{subgraph} induced by $A$ is the graph $(A, E \cap (A \times X \times A), X)$.

Let $\sigma_{g_1, g_2} \in \mathcal{I}_{m,n}$ and suppose without loss of generality that $m \leq n$. We consider $\mathcal{I}_{m,m}$ and $\mathcal{POPI}_{m,n}$ to be embedded in $\mathcal{I}_{n,n}$ via an embedding $f$ where $(i)\sigma = j$ for $\sigma$ in $\mathcal{I}_{m,m}$ or $\mathcal{POPI}_{m,n}$ if and only if $(\sigma)f$ in $\mathcal{I}_{n,n}$ maps $i$ to $j$. The algorithm for determining $\mu(\sigma_{g_1, g_2})$ has the following steps:

\begin{enumerate}[label=(\roman*)]
  \item suppose that $\sigma_{g_1, g_2} \in \mathcal{I}_{m,n}$ where $|\dom(\sigma_{g_1, g_2})| = r$ and $m \leq n$; \label{item:i}
  \item let $X_j$ denote the generating set for $\mathcal{I}_{j,j}$ consisting of $\mathcal{T}_j$ and $\operatorname{id}_{\{1, \ldots, j - 1\}}$; \label{item:ii}
  \item compute the left  $\Gamma_L(\mathcal{I}_{m,m}, X_m)$ and right $\Gamma_R(\mathcal{I}_{n,n}, X_n)$ Cayley graphs of $\mathcal{I}_{m,m}$ and $\mathcal{I}_{n,n}$ with respect to the sets $X_m$ and $X_n$ respectively; \label{item:iii}
    \item compute the union $\Gamma_{m,n}$ of $\Gamma_L(\mathcal{I}_{m,m}, X_m)$ and $\Gamma_R(\mathcal{I}_{n,n}, X_n)$; \label{item:iv}
  \item compute the set $\mathscr{D}_r = \{\alpha \in \mathcal{I}_{n,n} : |\dom(\alpha)| = r\}$ in $\Gamma_{m,n}$. \label{item:v}
  \end{enumerate}
Given that the relation $\mathscr{D}$ on $\mathcal{I}_{n,n}$ where $\alpha \mathrel{\mathscr{D}} \beta$ if and only if $|\dom(\alpha)| = |\dom(\beta)|$ is an equivalence relation (called Green's $\mathscr{D}$-relation), the subgraph $\Delta_{n,r}$ induced by $\mathscr{D}_r$ is strongly connected (in the sense that there is a path in both directions between all pairs of vertices). Paths in this strongly connected component will traverse edges from $\Gamma_L(\mathcal{I}_{m,m}, X_m)$ representing inversions from $\mathcal{T}_m$ acting on the genome $G_1$ with $m$ regions, and edges from $\Gamma_R(\mathcal{I}_{n,n}, X_n)$ representing inversions from $\mathcal{T}_n$ acting on the genome $G_2$ with $n$ regions.
 \begin{enumerate}[label=(\roman*)]\setcounter{enumi}{5}
  \item compute the subgraph $\Delta_{n,r}$ of $\Gamma_{m,n}$ induced by $\mathscr{D}_r$ \label{item:vi};
\item $\mu(\sigma_{g_1, g_2})$ is then the minimum distance in $\Delta_{n,r}$ between $\sigma_{g_1, g_2}$ and any element of $\mathcal{POPI}_{m,n}$ in $\mathscr{D}_r$. \label{item:vii}
\end{enumerate}

Note that steps \ref{item:i} to \ref{item:vi} need only be computed once for each $m$,$n$ and $r$, and the resulting value of $\Delta_{n,r}$ can be memoised.

Steps \ref{item:i} and \ref{item:ii} have combined time complexity $\mathcal{O}(n)$; step \ref{item:iii}
has time and space complexity
\[
  \mathcal{O}(|X_n||\mathcal{I}_{n,n}|) = \mathcal{O}\left(n\ \sum_{r = 0}^{n} \binom{n}{r}^2
    r!\right)
\]
(using the Froidure-Pin Algorithm described by~\citet{Froidure1997aa} for
example). Hence the time and space complexity for this step is at best $\mathcal{O}(n!)$. Steps \ref{item:iv} and \ref{item:v} also have time complexity $\mathcal{O}(|X_n||\mathcal{I}_{n,n}|)$ since the number of vertices and edges in $\Gamma_L(\mathcal{I}_{m,m}, X_M)$ and $\Gamma_R(\mathcal{I}_{n,n}, X_n)$ is $ \mathcal{O}(|X_n||\mathcal{I}_{n,n}|)$. Hence steps \ref{item:i} to \ref{item:vi} overall have time and space complexity at best $\mathcal{O}(n!)$.

For step \ref{item:vii}, the distance between any two vertices in a graph can be found
in a number of ways. One approach would be to apply the Floyd-Warshall
Algorithm to compute the shortest path between every pair of vertices
in $\Delta_{n, r}$; the time complexity of Floyd-Warshall is $\mathcal{O}(n ^
3)$ where $n$ is the number of vertices in the graph.  Another approach is to
perform a depth or breadth first search. The version implemented
by~\citet{DeBeule2022aa} uses a breadth first search that also utilises the
automorphism group of the graph to avoid visiting multiple identical
branches. The automorphism groups of the graphs $\Delta_{n, r}$ are
non-trivial when $r \neq 0$ and this approach seems to offer the best
performance; see Table~\ref{table-aut-group-sizes}. Due to its high complexity the exact algorithm given above is only applicable for relatively small values of $n$; see Table~\ref{table-exact-times}.

\begin{table}[ht]
\centering
 \begin{tabular}{c|c|c|c|c|c|c|c}
   \diagbox{$r$}{$n$} & 2 & 3  & 4 & 5 & 6 & 7 & 8 \\
   \hline\hline
   0 & 1 & 1   & 1    & 1   & 1   & 1   & 1    \\
   1 & 8 & 72  & 384  & 200 & 288 & 392 & 512  \\
   2 & 2 & 144 & 1024 & 400 & 576 & 784 & 1024 \\
   3 & - & 72  & 128  & 200 & 288 & 392 & 512  \\
   4 & - & -   & 128  & 200 & 288 & 392 & 512  \\
   5 & - & -   & -    & 200 & 288 & 392 & 512  \\
   6 & - & -   & -    & -   & 288 & 392 & 512  \\
   7 & - & -   & -    & -   & -   & 392 & 512  \\
   8 & - & -   & -    & -   & -   & -   & 512  \\
 \end{tabular}
\vspace*{2.5mm}
\caption{Sizes of the automorphism groups of the graph $\Delta_{n, r}$.}
\label{table-aut-group-sizes}
\end{table}

\begin{table}[ht]
\centering
\resizebox{\textwidth}{!}{%
 \begin{tabular}{l|l|l|l|l|l|l}
   $n$ & 3  & 4 & 5 & 6 & 7 & 8 \\
   \hline\hline
   $|\mathcal{I}_{n, n}|$ & 34 & 209 & 1,546 & 13,327 & 130,922 & 1,441,729
   \\\hline
   \# of pairs of genomes & 153 & 2,704 & 69,225 & 2,503,836 &  122,783,857 & 7,859,043,648
   \\\hline
   Time for \ref{item:i}  to \ref{item:vi}  --- (s) & $3.880\times 10 ^ {-8}$  & $1.228\times 10 ^ {-6}$ & $7.227\times 10 ^ {-3}$ & $8.556 \times 10 ^ {-2}$ & $2.808 \times 10 ^ {0}$ & $2.595\times 10 ^ 2$ \\\hline
   Time for \ref{item:vii}  --- total (s) & $4.083 \times 10 ^ {-4}$ & $7.594 \times 10 ^
   {-3}$ & $1.938 \times 10 ^ {-1}$ & $7.402\times 10 ^ 0$ & $\sim 6$ minutes &
   $\sim 7$ hours  \\\hline
   Time for \ref{item:vii}  --- mean (s)  & $2.669 \times 10 ^ {-6}$ & $2.809\times 10 ^
   {-6}$ & $2.800\times 10 ^ {-6}$ & $2.956\times 10 ^ {-6}$ & ? & ?  \\
 \end{tabular}
 }
\vspace*{2.5mm}
 \caption{Time for the various steps in the exact algorithm when applied to
 every pair of genomes with $n$ and $k$ regions where $n \geq k$.}

 \label{table-exact-times}
\end{table}

To the best of the authors' knowledge, it is not clear whether there exists a polynomial time algorithm for Problem \ref{prob:main}. This problem is potentially a computationally difficult problem, and so from a practical perspective it appears that approximation based approaches, or variations, offer the most promise moving forward.  

To highlight this, we finish this section by showing that a variation of Problem \ref{prob:main} --- whether two genomes of equal size are an equivalent inversion/deletion distance from their most recent common ancestor --- is \textbf{NP}-complete. 
Since genomes of equal size arise from their common ancestor via the same number of deletions, 
by Theorem \ref{th:orientation} this problem is equivalent to a problem called \textsc{balancedsort}.  \textsc{Balancedsort} takes a partial permutation $\sigma \in \mathcal{I}_{m,n}$ and $k \in \mathbb{N}$, and asks whether there exist sequences $t_m$ and $t_n$ of inversions in $\mathcal{T}_m$ and $\mathcal{T}_n$ respectively with $\ell(t_m)+ \ell(t_n) \leq k$ such that $t_m\sigma t_n \in \mathcal{POI}_{m,n}$ and $\ell(t_m) = \ell(t_n)$. Note that we may consider $\mathcal{POI}_{m,n}$ instead of $\mathcal{POPI}_{m,n}$ here as, if there exists $(g_1, g_2) \in G_1 \times G_2$ such that $t_m\sigma_{g_1, g_2}t_n \in \mathcal{POPI}_{m,n}$, then there exists a reference pair $(h_1, h_2) \in G_1 \times G_2$ obtained by rotating at least one of the genomes such that $t_m\sigma_{h_1, h_2}t_n \in \mathcal{POI}_{m,n}$.

\begin{thm}\label{th:balancedsort}
Determining whether two bacterial genomes of equal size are an equivalent inversion/deletion distance from their most recent common ancestor is \textbf{NP}-complete.
\end{thm}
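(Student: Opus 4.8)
The plan is to establish membership in \textbf{NP} and \textbf{NP}-hardness separately, working with the equivalent problem \textsc{balancedsort} identified above: given $\sigma\in\mathcal{I}_{m,n}$ and $k\in\N$, decide whether there exist a sequence $t_m$ over $\mathcal{T}_m$ and a sequence $t_n$ over $\mathcal{T}_n$ with $\ell(t_m)+\ell(t_n)\le k$, with $\ell(t_m)=\ell(t_n)$, and with $t_m\sigma t_n\in\mathcal{POI}_{m,n}$. Membership in \textbf{NP} is routine once one notes that the budget may be taken polynomial in $n$: bubble-sorting one genome alone carries any $\sigma$ into $\mathcal{POI}_{m,n}$ using at most $\binom{n}{2}$ inversions; appending a pair of equal inversions $ss$ pads either side by $2$ without changing the result; and, in the presence of blank positions (the only case not already handled by the equal-region-set situation, where the problem is easily seen to be in \textbf{P} because the target is a single permutation and one reduces to a circular Cayley distance computation), sliding a blank past a neighbouring domain point adjusts one side's length by $1$ while remaining in $\mathcal{POI}_{m,n}$. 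Hence every ``yes'' instance admits a balanced witness of total length $O(n^2)$, so a witness $(t_m,t_n)$ has polynomial size and is verified in polynomial time.

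For \textbf{NP}-hardness I would reduce from a strongly \textbf{NP}-complete problem whose combinatorial content mirrors the balancing constraint $\ell(t_m)=\ell(t_n)$ --- a partition-type problem such as \textsc{3-partition} is the natural first candidate, strong \textbf{NP}-completeness being needed because the gadget sizes below scale with the numerical data, so that a reduction from ordinary (weakly \textbf{NP}-complete) \textsc{partition} would not be polynomial. From such an instance, with item sizes $a_1,\dots,a_N$, I would construct a partial permutation $\sigma$ whose domain and image points lie on the two circular genomes and are grouped into $N$ independent \emph{item gadgets}, separated by long corridors of blank points (regions present in only one genome). The gadget for $a_i$ would be engineered so that: (i) it is out of cyclic order by exactly $a_i$, so that $a_i$ adjacent transpositions are necessary and sufficient to resolve it; and (ii) --- the key feature --- those $a_i$ transpositions may be carried out entirely among the positions of $g_1$ or entirely among those of $g_2$, but any split of a single gadget's work across both genomes strictly increases its cost. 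Setting $k=\sum_i a_i$, the overall minimum is then forced to equal $k$ and is attained only by assigning each gadget wholly to one genome; a balanced solution therefore exists precisely when $\{a_1,\dots,a_N\}$ admits the prescribed equal-sum split, i.e.\ precisely when the source instance is a ``yes''.

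The main obstacle is the gadget analysis, which I would phrase as two lemmas. The first is a tight lower bound: the minimum number of inversions sending $\sigma$ into $\mathcal{POI}_{m,n}$ equals $\sum_i a_i$. For this I would use a breakpoint-style potential --- a weighted count of pairs of domain points that are out of cyclic order --- that decreases by at most one per inversion, together with an additivity argument showing that the blank corridors genuinely decouple the gadgets, so that no single inversion makes progress on two of them simultaneously. The second is the \emph{no-cheap-split} property: the work for one gadget cannot be divided between $g_1$ and $g_2$ without paying more than $a_i$; this is exactly what distinguishes \textsc{balancedsort} from a triviality, since if splitting were free then every even total could be balanced, and proving it requires a careful account of how left- and right-multiplication by adjacent transpositions act on a gadget sitting in its corridor. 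Designing one concrete gadget that simultaneously realises displacement $a_i$, is inert with respect to its neighbours, and exhibits this asymmetric behaviour under one-sided versus two-sided sorting is the step I expect to take the most work; checking that the resulting instance has size polynomial in the (unary) input is then routine. If a workable partition-type source proves elusive, the fallback is to encode a purely combinatorial source such as \textsc{3-sat}, with variable gadgets whose ``work'' is routed to one genome or the other and clause gadgets that resolve within budget exactly when satisfied.
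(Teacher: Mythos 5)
Your overall architecture coincides with the paper's: \textbf{NP}-membership is dispatched by a polynomial bound on witness length, and hardness is obtained by reducing from a partition-type problem in which each item $a_j$ becomes a gadget that costs exactly $a_j$ adjacent transpositions, can be paid for entirely on either genome but not profitably split between them, and is independent of the other gadgets; with budget $k=\sum_j a_j$, a balanced solution then exists precisely when the items admit an equal-sum split. Where the paper differs is in how little machinery this requires. It reduces from ordinary \textsc{partition}, taking $m=n+\sum_i a_i$ and letting the $j$-th gadget be the single $2$-cycle interchanging the domain points $j+\sum_{i<j}a_i$ and $j+\sum_{i\le j}a_i$, with the intervening points simply omitted from the domain. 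Uncrossing that one transposition forces one endpoint past the other, hence exactly $a_j$ adjacent transpositions all applied on the same side (left- and right-multiplication each reorder only one row, and the relative order must be flipped on exactly one row), and distinct gadgets occupy disjoint intervals, so no separate corridor engineering, breakpoint potential, or decoupling lemma is needed. Your proposal defers precisely these decisive steps --- the concrete gadget, the tight lower bound, and the no-cheap-split lemma --- to future work, so as it stands it is a plan rather than a proof.

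Your one substantive divergence, insisting on a strongly \textbf{NP}-complete source, is a sharp observation: the paper's constructed instance has description length $\Theta\bigl(\sum_i a_i\bigr)$, so viewed as a reduction from binary-encoded \textsc{partition} it is only pseudo-polynomial, and the claim that it ``is easily seen to run in polynomial time'' deserves scrutiny. But your proposed remedy does not close this gap. Any instance of \textsc{balancedsort} on $m+n$ points can be sorted with $O((m+n)^2)$ inversions, so every induced ``which side pays for which gadget'' subproblem is a \textsc{partition} instance with unarily bounded numbers, solvable by dynamic programming; moreover \textsc{3-partition} asks for equal-sum triples, which the single binary constraint $\ell(t_m)=\ell(t_n)$ does not encode. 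A gadget-per-item scheme of the kind you describe therefore cannot be rescued merely by swapping the numerical source problem; you would have to carry out your \textsc{3-sat} fallback with genuinely new variable and clause gadgets, for which no construction is offered. In short, your plan reproduces the paper's strategy while omitting its concrete content, and the refinement you add, though it flags a real issue, is not yet a working alternative.
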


\begin{proof}
We proceed by showing that \textsc{balancedsort}, which is clearly in \textbf{NP}, is \textbf{NP}-complete. Consider an instance of the well known \textbf{NP}-complete problem \textsc{partition}, which consists of a multiset $A = \{a_1, \dots, a_n\}$ of positive integers and asks if there exists a partition of $A$ into disjoint multisets $X$ and $Y$ such that $\sum_{x\in X}x = \sum_{y\in Y}y$. Construct an instance of \textsc{balancedsort} from an instance of \textsc{partition} by letting $m = n + \sum_{i=1}^{n} a_i$ and by defining a partial permutation $\sigma \in \mathcal{I}_m$ to be such that 
\begin{itemize}
\item $(1)\sigma = a_1 + 1$ and $(a_1 + 1)\sigma = 1$, 
\item $\left(j + \displaystyle \sum_{i=1}^{j-1}a_i\right)\sigma = j+ \displaystyle \sum_{i=1}^{j}a_i$ for all $2 \leq j \leq n-1$ and 
\item $\left(j+ \displaystyle \sum_{i=1}^{j}a_i\right)\sigma = \left(j + \displaystyle \sum_{i=1}^{j-1}a_i\right)$ for all $2 \leq j \leq n-1$.
\end{itemize}
The value of $k$ is the sum of all elements in $A$. Figure \ref{fig:reduction} illustrates an example of this reduction, which is easily seen to run in polynomial time. 

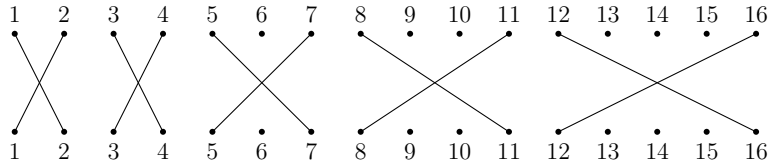
\begin{figure}[ht]
\begin{center}
\begin{tikzpicture}[scale = 0.65, every node/.style={scale=0.75}]

\foreach \i in {1,...,16}
{
\node[circle,fill=black,inner sep=1pt,minimum size=3pt, label= below:\i] (a) at (\i,0) {};
}
\foreach \i in {1,...,16}
{
\node[circle,fill=black,inner sep=1pt,minimum size=3pt, label= above:\i] (a) at (\i,2) {};
}

\draw (1,2) -- (2,0);
\draw (2,2) -- (1,0);
\draw (3,2) -- (4,0);
\draw (4,2) -- (3,0);
\draw (5,2) -- (7,0);
\draw (7,2) -- (5,0);
\draw (8,2) -- (11,0);
\draw (11,2) -- (8,0);
\draw (12,2) -- (16,0);
\draw (16,2) -- (12,0);

\end{tikzpicture}
\end{center}
\caption{For an instance $A = \{1,1,2,3,4\}$ of \textsc{partition}, an instance of \textsc{balancedsort} is constructed with $\sigma \in \mathcal{I}_{16}$ and $k = 11$.}
\label{fig:reduction}
\end{figure}

We first show that if there exists a partition of $A$ into $X$ and $Y$ such that $\sum_{x \in X}x = \sum_{y \in Y}y$ then there exists sequences $p$ and $q$ of inversions in $\mathcal{T}_m$ with $\ell(p) + \ell(q) \leq k$ such that $p\sigma q \in \mathcal{POI}_m$ where $\ell(p) = \ell(q)$. Define, without loss of generality, sequences $C_{a_j}$ of inversions (represented by 2-cycles) for all $a_j \in A$ where 
\[
C_{a_j} = \left(j + \sum_{i=1}^{j-1}a_{i}, 1+ j + \sum_{i=1}^{j-1}a_{i}\right)\cdots \left(j-1 + \sum_{i=1}^{j}a_{i}, j + \sum_{i=1}^{j}a_{i} \right)
\]
and note that the sequence $C_{a_j}$ removes the crossing consisting of domain elements $j + \sum_{i=1}^{j-1}a_{i}$ and $j + \sum_{i=1}^{j}a_{i}$ in a minimal way by left or right multiplication (but not both) without creating additional crossings.  Given a partition of $A$ into $X = \{a_{x_1}, \dots, a_{x_b}\}$ and $Y = \{a_{y_1}, \dots, a_{y_c}\}$ there is thus sequences $p = C_{a_{x_1}}\cdots C_{a_{x_b}}$ and $q = C_{a_{y_1}}\cdots C_{a_{y_c}}$ of inversions in $\mathcal{T}_m$ (where $\ell(p) + \ell(q) = k$ by construction) such that $p\sigma q \in \mathcal{POI}_m$ and where $\ell(p) = \ell(q)$ since $\sum_{x \in X}x = \sum_{y \in Y}y$. 

Conversely, suppose that for the constructed instance of {\sc balancedsort} there exists sequences $p$ and $q$ of inversions in $\mathcal{T}_m$ with $\ell(p) + \ell(q) \leq k$ such that $p\sigma q \in \mathcal{POI}_m$ where $\ell(p) = \ell(q)$. By construction $\ell(p) + \ell(q) = k$ where $k = \sum_{a \in A} a$. The sequence $p$ can be written in the form $C_{a_{x_1}}\cdots C_{a_{x_b}}$ and the sequence $q$ can be written in the form $C_{a_{y_1}}\cdots C_{a_{y_c}}$ where the sets $\{x_1, \dots, x_b\}$ and $\{y_1, \dots, y_c\}$ are disjoint. This is because each crossing is removed minimally by exclusively left or right multiplication of inversions without creating additional crossings. In other words, these sequences determine a partition of $A$ into $X = \{a_{x_1}, \dots, a_{x_b}\}$ and $Y = \{a_{y_1}, \dots, a_{y_c}\}$ where $\sum_{x\in X} x = \sum_{y \in Y}y$ follows from the fact that $p$ and $q$ are such that $\ell(p) = \ell(q)$. 
\end{proof}

\section{Discussion} 
\label{s:discussion}

This paper has introduced an algebraic framework for modelling two genome rearrangements, inversion and deletion, that are known to occur through the same biological process, namely site-specific recombination.  This framework involves the use of the symmetric inverse monoid, and appears to be the first usage of this type of semigroup model in the study of genome rearrangements.  As such a first step, there are on the one hand clear limitations of the model presented, and on the other, clear opportunities for further development.  

The most significant limitation involves the scope of the allowable rearrangements.  While the model treats a genome as a circular sequence of preserved regions of DNA (a standard way to view genomes in the rearrangement literature), it only permits inversions of adjacent regions, and only permits deletions of a single region at a time.  These two simplifying restrictions make the algebra more manageable by restricting the generating sets of the monoids involved.  But they are also broadly consistent with each other, since the underlying biological argument behind restricting the length of DNA sequence inverted or deleted is the same, because both arise from the same mechanism.  As noted in the Introduction, traditional rearrangement models do not restrict the length of the inverted region, and those that incorporate deletion (such as~\cite{el2000genome}) allow any length to be deleted, and with equal probability. They also generally allow the opposite operation, insertion, which typically occurs via different biological mechanism and so the savings in the computational simplicity come at an arguable cost to biological faithfulness --- as indeed they do in the present paper.  

A natural extension to the model presented here would be to allow longer regions to be inverted and/or deleted, perhaps along the lines attempted in~\cite{bhatia2020path}, which allows longer inversions in a group-theoretic model, but imposes a cost by length.  Indeed, some results here, such as Theorem~\ref{thm:del.first}, apply regardless of the generating set for $S_n$, or the number of regions being deleted.

Other generalisations may become available as a direct result of the algebraic framework.  For instance,
the algebraic formalisation using the symmetric inverse monoid can be generalised further by using monoids and categories of binary relations or partial functions. The use of certain binary relations $\lambda: R \to \n$ (or partial functions $\n \to R$ using the convention of positions to regions) allows one to account for repeated region labels, where an ordered pair $(r, n)$ is in $\lambda$ if and only if the region $r$ appears in position $n$ in a sequences of genome regions. For instance, the sequence $r_1r_2r_1r_3$ of regions where $\{r_1, r_2, r_3\} \subseteq R$ would correspond to the relation $\{(r_1, 1), (r_2, 2), (r_1, 3), (r_3, 4)\}$. 

Given sets $\m$ and $\n$ where $m,n \in \mathbb{N}$, the set of relations $\{(x_1, y_1), \dots, (x_j, y_j)\}$ such that $\{x_1, \dots, x_j\} \subseteq \m$, $y_i \in \n$ for all $1 \leq i \leq j$ and $y_i \neq y_j$ when $i \neq j$ is denoted by $\widehat{\mathcal{PT}}_{m,n}$, while the set of (analogously defined) partial functions from $\m \to \n$ is denoted $\mathcal{PT}_{m,n}$. One can define a (small) category $\widehat{\mathcal{PT}}$ whose objects are the natural numbers, and where the set of arrows from $m$ to $n$ is the set $\widehat{\mathcal{PT}}_{m,n}$ under the composition of binary relations. Given a relation $\lambda$ from $R$ to $n$ described above, we can compose on the right by elements of $\widehat{\mathcal{PT}}$ to represent not only inversions and deletions (since $\widehat{\mathcal{PT}}$ contains $\mathcal{I}$), but also to represent duplications of regions. To model a duplication we multiply on the right by relations of the form $\uV_{i;n} \in \widehat{\mathcal{PT}}_{n, n+1}$ where, without loss of generality (as in Figure \ref{fig:dupdiagram}), we have
\[
\uV_{i;n} = \{(1, 1), \dots, (i, i), (i, i+1), (i+1, i+2), \dots, (n, n+1)\}.
\]

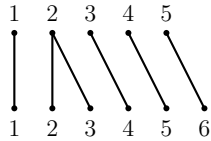
\begin{figure}[h]
\begin{center}
\begin{tikzpicture}[scale = 0.5, thick, every node/.style={scale=0.75}]

\foreach \i in {1,...,6}
{
\node[circle,fill=black,inner sep=1pt,minimum size=3pt, label= below:\i] (a) at (\i,0) {};
}
\foreach \i in {1,...,5}
{
\node[circle,fill=black,inner sep=1pt,minimum size=3pt, label= above:\i] (a) at (\i,2) {};
}

\draw (1,2) -- (1,0);
\draw (2,2) -- (2,0);
\draw (2,2) -- (3,0);
\draw (3,2) -- (4,0);
\draw (4,2) -- (5,0);
\draw (5,2) -- (6,0);
\end{tikzpicture}
\end{center}
\vspace*{-5mm}
\caption[a]{The relation diagram of $\uV_{2;5} \in \mathcal{\widehat{PT}}_{5, 6}$. Note that unlike the deletion in Figure \ref{fig:deldiagram} where the degree of the vertex labelled by 2 in the upper row was 0 (to represent the fact the region in position 2 was deleted), the upper row vertex labelled by 2 in this instance has degree 2 to represent the fact that the region in position 2 has been duplicated.}
\label{fig:dupdiagram}
\end{figure}

With this algebraic framework in mind, it is possible to consider the new problem of constructing the most recent common ancestor of two bacterial genomes (which may have repeated regions) under the three operations of inversions, deletions and duplications. Since the problem of reconstructing the most recent common ancestor of two genomes under exclusively inversions and deletions is a special case of this new problem, the same asymmetry present in the inversion/deletion model is also present in the inversion/deletion/duplication model given that only pre-existing genome regions may be duplicated. It is then natural to investigate whether similar combinatorial optimization problems regarding elements of $\widehat{\mathcal{PT}}$ have analogous interpretations to those presented here, such as Problem~\ref{prob:main}.

Finally, it would be interesting to explore whether the framework developed here could be cast in the representation-theoretic framework designed for maximum likelihood estimates for genome rearrangement models~\citep{serdoz2017maximum}, that is presented in~\citet{sumner2017representation, terauds2022new}.  Indeed, on one hand, \citet{terauds2022new} remark that it may be generalised to models using semigroups, while on the other hand, the representation theory of finite monoids including that of the symmetric inverse monoid has been well studied \citep{steinberg2016representation, Munn1964, solomon2002representations}.

\section{Declarations}
Andrew Francis was partially supported by Australian Research Council Discovery Project DP180102215. Data sharing is not applicable to this article as no datasets were generated or analysed. The authors have no competing interests to declare that are relevant to the content of this article.

\end{document}